\documentclass[12pt,leqno]{article}
\usepackage{amsfonts}
\pagestyle{plain}
\usepackage{amsmath, dsfont, amsthm, amsfonts, amssymb, color}
\usepackage{mathrsfs}
\usepackage{color}
\usepackage{stmaryrd}
\setlength{\topmargin}{0cm} \setlength{\oddsidemargin}{0cm}
\setlength{\evensidemargin}{0cm} \setlength{\textwidth}{16.5truecm}
\setlength{\textheight}{22truecm}

\newtheorem{thm}{Theorem}[section]

\newtheorem{lem}[thm]{Lemma}

\newtheorem{rem}[thm]{Remark}
\theoremstyle{definition}

\newcommand{\scr}[1]{\mathscr #1}
\definecolor{wco}{rgb}{0.5,0.2,0.3}

\numberwithin{equation}{section} \theoremstyle{remark}

\newcommand{\ua}{\uparrow}

\title{{\bf
Long Time $\W_0$-$\widetilde{\W}_1$ type Propagation of Chaos for Mean Field Interacting Particle System}\footnote{Supported in
 part by  National Key R\&D Program of China (No. 2022YFA1006000) and NNSFC (12271398,12101390).} }
\author{
{\bf  Xing Huang $^{a)}$,  Fen-Fen Yang $^{b)}$, Chenggui Yuan $^{c)}$  }\\
\footnotesize{ a)Center for Applied Mathematics, Tianjin
University, Tianjin 300072, China}\\
\footnotesize{  xinghuang@tju.edu.cn}\\
 \footnotesize{ b)Department of Mathematics, Shanghai University, Shanghai 200444, China}\\
\footnotesize{ yangfenfen@shu.edu.cn }\\
 \footnotesize{ c)Department of Mathematics, Swansea University, Bay campus, SA1 8EN, UK}\\
\footnotesize{ c.yuan@swansea.ac.uk }}
\begin{document}
\allowdisplaybreaks
\def\R{\mathbb R}  \def\ff{\frac} \def\ss{\sqrt} \def\B{\mathbf
B} \def\W{\mathbb W}
\def\N{\mathbb N} \def\kk{\kappa} \def\m{{\bf m}}
\def\ee{\varepsilon}\def\ddd{D^*}
\def\dd{\delta} \def\DD{\Delta} \def\vv{\varepsilon} \def\rr{\rho}
\def\<{\langle} \def\>{\rangle} \def\GG{\Gamma} \def\gg{\gamma}
  \def\nn{\nabla} \def\pp{\partial} \def\E{\mathbb E}
\def\d{\text{\rm{d}}} \def\bb{\beta} \def\aa{\alpha} \def\D{\scr D}
  \def\si{\sigma} \def\ess{\text{\rm{ess}}}
\def\beg{\begin} \def\beq{\begin{equation}}  \def\F{\scr F}
\def\Ric{\text{\rm{Ric}}} \def\Hess{\text{\rm{Hess}}}
\def\e{\text{\rm{e}}} \def\ua{\underline a} \def\OO{\Omega}  \def\oo{\omega}
 \def\tt{\tilde} \def\Ric{\text{\rm{Ric}}}
\def\cut{\text{\rm{cut}}} \def\P{\mathbb P} \def\ifn{I_n(f^{\bigotimes n})}
\def\C{\scr C}      \def\aaa{\mathbf{r}}     \def\r{r}
\def\gap{\text{\rm{gap}}} \def\prr{\pi_{{\bf m},\varrho}}  \def\r{\mathbf r}
\def\Z{\mathbb Z} \def\vrr{\varrho}
\def\L{\scr L}\def\Tt{\tt} \def\TT{\tt}\def\II{\mathbb I}
\def\i{{\rm in}}\def\Sect{{\rm Sect}}  \def\H{\mathbb H}
\def\M{\scr M}\def\Q{\mathbb Q} \def\texto{\text{o}}
\def\Rank{{\rm Rank}} \def\B{\scr B} \def\i{{\rm i}} \def\HR{\hat{\R}^d}
\def\to{\rightarrow}\def\l{\ell}\def\iint{\int}
\def\EE{\scr E}\def\Cut{{\rm Cut}}
\def\A{\scr A} \def\Lip{{\rm Lip}}
\def\BB{\scr B}\def\Ent{{\rm Ent}}\def\L{\scr L}
\def\R{\mathbb R}  \def\ff{\frac} \def\ss{\sqrt} \def\B{\mathbf
B}
\def\N{\mathbb N} \def\kk{\kappa} \def\m{{\bf m}}
\def\dd{\delta} \def\DD{\Delta} \def\vv{\varepsilon} \def\rr{\rho}
\def\<{\langle} \def\>{\rangle} \def\GG{\Gamma} \def\gg{\gamma}
  \def\nn{\nabla} \def\pp{\partial} \def\E{\mathbb E}
\def\d{\text{\rm{d}}} \def\bb{\beta} \def\aa{\alpha} \def\D{\scr D}
  \def\si{\sigma} \def\ess{\text{\rm{ess}}}
\def\beg{\begin} \def\beq{\begin{equation}}  \def\F{\scr F}
\def\Ric{\text{\rm{Ric}}} \def\Hess{\text{\rm{Hess}}}
\def\e{\text{\rm{e}}} \def\ua{\underline a} \def\OO{\Omega}  \def\oo{\omega}
 \def\tt{\tilde} \def\Ric{\text{\rm{Ric}}}
\def\cut{\text{\rm{cut}}} \def\P{\mathbb P} \def\ifn{I_n(f^{\bigotimes n})}
\def\C{\scr C}      \def\aaa{\mathbf{r}}     \def\r{r}
\def\gap{\text{\rm{gap}}} \def\prr{\pi_{{\bf m},\varrho}}  \def\r{\mathbf r}
\def\Z{\mathbb Z} \def\vrr{\varrho}
\def\L{\scr L}\def\Tt{\tt} \def\TT{\tt}\def\II{\mathbb I}
\def\i{{\rm in}}\def\Sect{{\rm Sect}}  \def\H{\mathbb H}
\def\M{\scr M}\def\Q{\mathbb Q} \def\texto{\text{o}} \def\LL{\Lambda}
\def\Rank{{\rm Rank}} \def\B{\scr B} \def\i{{\rm i}} \def\HR{\hat{\R}^d}
\def\to{\rightarrow}\def\l{\ell}
\def\8{\infty}\def\I{1}\def\U{\scr U} \def\n{{\mathbf n}}
\maketitle

\begin{abstract} In this paper, a general result on the long time $\W_0$-$\widetilde{\W}_1$ type propagation of chaos, propagation of chaos with regularization effect, for mean field interacting particle system driven by L\'{e}vy noise is derived, where $\W_0$ is one half of the total variation distance while $\widetilde{\W}_1$ is the $L^1$-Wasserstein distance.  By using the method of coupling, the general result is applied to mean field interacting particle system driven by multiplicative Brownian motion and additive $\alpha(\alpha>1)$-stable noise respectively, where the non-interacting drift is assumed to be dissipative in long distance and the initial distribution of interacting particle system converges to that of the limit equation in $\widetilde{\W}_1$.
 \end{abstract}

\noindent
 AMS subject Classification:\  60H10, 60K35, 82C22.   \\
\noindent
 Keywords: Mean field interacting particle system, total variation distance, McKean-Vlasov SDEs, propagation of chaos, $\alpha$-stable noise, reflection coupling.
 \vskip 2cm
\section{Introduction}

Let $(E,\rho)$ be a Polish space and $o$ be a fixed point in $E$. Let $\scr P(E)$ be the set of all probability measures on $E$ equipped with the weak topology. For $p>0$, let
$$\scr P_p(E):=\big\{\mu\in \scr P(E): \mu(\rho(o,\cdot)^p)<\infty\big\},$$
and define the $L^p$-Wasserstein distance
$$\W_p(\mu,\nu)= \inf_{\pi\in \mathbf{C}(\mu,\nu)} \bigg(\int_{E\times E} \rho(x,y)^p \pi(\d x,\d y)\bigg)^{\ff 1 {p\vee 1}},\ \  \mu,\nu\in \scr P_p(E), $$ where $\mathbf{C}(\mu,\nu)$ is the set of all couplings of $\mu$ and $\nu$. When $p>0$, $(\scr P_p(E), \W_p)$ is a polish space. To discuss the interacting particle system, we will consider the product space $E^k$ with $k\geq 1$.
For any $k\geq 1$, define
$$\rho_1(x,y)=\sum_{i=1}^k\rho(x^i,y^i),\ \ x=(x^1,x^2,\cdots,x^k), y=(y^1,y^2,\cdots,y^k)\in E^k$$
and define
$$\widetilde{\W}_1(\mu,\nu)= \inf_{\pi\in \mathbf{C}(\mu,\nu)} \bigg(\int_{E^k\times E^k} \rho_1(x,y) \pi(\d x,\d y)\bigg),\ \  \mu,\nu\in \scr P_1(E^k), $$
where $\scr P_1(E^k)=\big\{\mu\in \scr P(E^k): \mu(\rho_1(\mathbf{o},\cdot))<\infty\big\}$ for $\mathbf{o}=(o,o,\cdots,o)\in E^k$.

We will also use the total variation distance:
$$\|\gamma-\tilde{\gamma}\|_{var}=\sup_{\|f\|_\infty\leq 1}|\gamma(f)-\tilde{\gamma}(f)|,\ \ \gamma,\tilde{\gamma}\in \scr P(E).$$
In particular, for any $\gamma,\tilde{\gamma}\in \scr P(\R^n)$, since $C^2_b(\R^n)$, the functions from $\R^n$ to $\R$ having bounded and continuous up to second order derivatives , is dense in $\scr B_b(\R^n)$ under  $L^1(\gamma+\tilde{\gamma})$, we have
\begin{align}\label{tvc}\|\gamma-\tilde{\gamma}\|_{var}=\sup_{\|f\|_\infty\leq 1, f\in C_b^2(\R^n)}|\gamma(f)-\tilde{\gamma}(f)|,\ \ \gamma,\tilde{\gamma}\in \scr P(\R^n).
\end{align}
In addition, it holds
$$\|\gamma-\tilde{\gamma}\|_{var}=2\W_0(\tilde{\gamma},\gamma):=2\inf_{\pi\in \mathbf{C}(\gamma,\tilde{\gamma})}\int_{\R^n\times \R^n} 1_{\{x\neq y\}} \pi(\d x,\d y),\ \ \gamma,\tilde{\gamma}\in \scr P(\R^n).$$

Let $Z_t$ be an $n$-dimensional L\'{e}vy process on some complete filtration probability space $(\Omega, \scr F, (\scr F_t)_{t\geq 0},\P)$. Recall that for a general $n$-dimensional L\'{e}vy process, its characteristic function has the form
$$\E\e^{i\<\xi,Z_t\>}=\exp\left\{i\<\eta,\xi\>t-\frac{1}{2}\<a\xi,\xi\>t+t\int_{\R^n-\{0\}}\e^{i\<z,\xi\>}-1-i\<z,\xi\> 1_{\{|z|\leq 1\}}\nu(\d z)\right\},\ \ \xi\in\R^n,$$
where $\eta\in\R^n$, $a$ is an $n\times n$ non-negative definite symmetric matrix and $\nu$ is the L\'{e}vy measure satisfying
$$\int_{\R^n}(1\wedge |z|^2)\nu(\d z)<\infty.$$
Let $b:[0,\infty)\times \R^d\times\scr P(\R^d)\to\R^d$, $\sigma:[0,\infty)\times \R^d\to\R^d\otimes\R^{n}$ are measurable and are bounded on bounded set.
Let $N\ge1$ be an integer and $(Z^i_t)_{1\le i\le N}$ be i.i.d.\,copies of $Z_t.$ Consider the non-interacting particle system:
\begin{align}\label{GPS}\d X_t^i= b_t(X_t^i, \L_{X_t^i})\d t+  \sigma_t(X^i_{t-}) \d Z^i_t,\ \ 1\leq i\leq N,
\end{align}
and the mean field interacting particle system
\begin{align}\label{GPS00}\d X^{i,N}_t=b_t(X_t^{i,N}, \hat\mu_t^N)\d t+\sigma_t(X^{i,N}_{t-}) \d Z^i_t,\ \ 1\leq i\leq N,
\end{align}
where $\L_{X_t^i}$ is the distribution of $X_t^i$ while $\hat\mu_t^N$ stands for the empirical distribution of $(X_t^{i,N})_{1\leq i\leq N}$, i.e.
\begin{equation*}
 \hat\mu_t^N =\ff{1}{N}\sum_{j=1}^N\dd_{X_t^{j,N}}.
 \end{equation*}
 Note that \eqref{GPS} consists of $N$ independent McKean-Vlasov SDEs, which are written as
 \begin{align}\label{GPSMV}\d X_t= b_t(X_t, \L_{X_t})\d t+\sigma_t(X_{t-})\d Z_t.
\end{align}
\eqref{GPSMV} was first introduced in \cite{McKean}. Kac's chaotic property, also called Boltzmann's property, was introduced in \cite{Kac} to derive the homogeneous Boltzmann equation by taking the limit of the master equation of Poisson-like process.
When \eqref{GPSMV} and \eqref{GPS00} are well-posed, for any $\mu\in\scr P(\R^d)$, let $P_t^\ast\mu$ be the distribution of the solution to \eqref{GPSMV} with initial distribution $\mu$, and for any exchangeable $\mu^N\in\scr P((\R^d)^N)$, $1\leq k\leq N$, $(P_t^{[k],N})^\ast\mu^N$
be the distribution of $(X_t^{i,N})_{1\leq i\leq k}$ with initial distribution $\mu^N$. Moreover, let $\mu^{\otimes k}$ denote the $k$ independent product of $\mu$, i.e. $\mu^{\otimes k}=\prod_{i=1}^k\mu$. For any $1\leq k\leq N$, let $\pi_k$ be the projection from $(\R^d)^{N}$ to $(\R^d)^{k}$ defined by
$$\pi_k(x)=(x^1,x^2,\cdots,x^k),\ \ x=(x^1,x^2,\cdots,x^N)\in(\R^d)^{N}.$$
Then it is not difficult to see that
\begin{align}\label{margi}(P_t^{[k],N})^\ast\mu^N=\{(P_t^{[N],N})^\ast\mu^N\}\circ(\pi_k)^{-1},\ \ 1\leq k\leq N.
\end{align}
Throughout the paper, we assume that the initial distribution of \eqref{GPS00} is exchangeable.

Let us recall some progress on the propagation of chaos. There are fruitful results in the case $Z_t^i=W_t^i$, $n$-dimensional Brownian motion. When $b_t(x,\mu)=\int_{\R^d}\tilde{b}_t(x-y)\mu(\d y)$ for some function $\tilde{b}$ being Lipschitz continuous in spatial variable uniformly in time variable, \cite{SZ} adopts the synchronous coupling method to explore the quantitative propagation of chaos in strong convergence. When $n=d$, $\sigma=I_{d\times d}$,  the entropy method was introduced in \cite{BJW,JW,JW1} to derive the quantitative entropy-entropy propagation of chaos:
\begin{align}\label{e-e}\mathrm{Ent}((P_t^{[k],N})^\ast\mu^N_0| (P_t^\ast\mu_0)^{\otimes k})\leq \frac{k}{N}\mathrm{Ent}(\mu^N_0|\mu_0^{\otimes N})+\frac{ck}{N},\ \ t\in[0,T],1\leq k\leq N,
\end{align}
for some constant $c>0$ depending on $T>0$, here the relative entropy of two probability measures is defined as
$$\mathrm{Ent}(\nu|\mu)=\left\{
  \begin{array}{ll}
    \nu(\log(\frac{\d \nu}{\d \mu})), & \hbox{$\nu\ll\mu$;} \\
    \infty, & \hbox{otherwise.}
  \end{array}
\right.$$
The idea of the entropy method is to derive the evolution on $t$ of $\mathrm{Ent}((P_t^{[N],N})^\ast\mu^N_0| (P_t^\ast\mu_0)^{\otimes N})$ from the Fokker-Planck-Kolmogorov equations for $(P_t^{[N],N})^\ast\mu^N_0$ and $(P_t^\ast\mu_0)^{\otimes N}$ respectively. The procedure relies on the chain rule of the Laplacian operator. Then \eqref{e-e} is obtained by the tensor property of relative entropy:
$$\mathrm{Ent}((P_t^{[k],N})^\ast\mu^N_0| (P_t^\ast\mu_0)^{\otimes k})\leq \frac{k}{N}\mathrm{Ent}((P_t^{[N],N})^\ast\mu^N_0| (P_t^\ast\mu_0)^{\otimes N}),\ \ t\in[0,T], 1\leq k\leq N.$$
Recently, \cite{LL} applies the BBGKY argument to estimate $\mathrm{Ent}((P_t^{[k],N})^\ast\mu^N_0| (P_t^\ast\mu_0)^{\otimes k})$ directly and then derives the sharp rate $\frac{k^2}{N^2}$ instead of $\frac{k}{N}$ for entropy-entropy propagation of chaos in the case of Lipchitzian or bounded interaction. Basing on \cite{LL}, combining the BBGKY argument and the uniform in time log-Sobolev inequality for $\L_{X_t^i}$, \cite{L21} shows the sharp long time entropy-entropy propagation of chaos, which together with the Pinsker inequality
$$\|\mu-\nu\|_{var}^2\leq 2\mathrm{Ent}(\nu|\mu)$$
implies the sharp long time $\W_0$-entropy type propagation of chaos, i.e.
\begin{align}\label{e-e1}\|(P_t^{[k],N})^\ast\mu^N_0- (P_t^\ast\mu_0)^{\otimes k}\|_{var}\leq \sqrt{2 \mathrm{Ent}(\mu_0^N\circ(\pi_k)^{-1}|\mu_0^{\otimes k})}+\frac{ck}{N},\ \ t\geq 0,1\leq k\leq N.
\end{align}

Quite recently, combining Wang's Harnack inequality with power, \cite{MRW} presents explicit conditions for the uniform in time log-Sobolev inequality for $\L_{X_t^i}$ in the noncovex frame, which together with \cite{LL} implies the long time entropy-entropy propagation of chaos with sharp rate $\frac{k^2}{N^2}$.
For kinetic mean field interacting particle system, the authors in \cite[Theorem 2.3]{CLRW} adopt the synchronous coupling technique to derive $\W_2$-$\W_2$ type propagation of chaos, and then apply the log-Harnack inequality (which is equivalent to the entropy-cost estimate) due to the coupling by change of measure to obtain the entropy-$\W_2^2$ type propagation of chaos, i.e. for $s+1\geq t>s\geq 0$,
$$\mathrm{Ent}((P_t^{[k],N})^\ast\mu^N_0| (P_t^\ast\mu_0)^{\otimes k})\leq \frac{C_1k}{N(t-s)^{3}}\W_2((P_s^{[N],N})^\ast\mu^N_0, (P_s^\ast\mu_0)^{\otimes N})^2+\frac{k}{N}C_2,\ \ 1\leq k\leq N $$
for some constant $C_2$ depending on $t,s$ and the variance of $P_s^\ast\mu_0$,
which together with the Pinsker's inequality implies the $\W_0$-$\W_2$ type propagation of chaos, i.e. for $s+1\geq t>s\geq 0$,
$$\W_0((P_t^{[k],N})^\ast\mu^N_0,(P_t^\ast\mu_0)^{\otimes k})\leq \frac{\sqrt{C_1k}}{\sqrt{N}(t-s)^{\frac{3}{2}}}\W_2((P_s^{[N],N})^\ast\mu^N_0, (P_s^\ast\mu_0)^{\otimes N})+\sqrt{\frac{k}{N}C_2}, \ \ 1\leq k\leq N.$$
 Both the entropy-$\W_2^2$ type and $\W_0$-$\W_2$ type propagation of chaos reflect the regularization effect of the stochastic noise which can allow the initial distribution $\mu_0$ to be singular with respect to $\mu_0^N$.

It is known that when the drifts are convex (namely uniformly dissipative), the long time $\W_2$-$\W_2$ type propagation of chaos can be derived by the synchronous coupling even in multiplicative noise case. However, as far as we know, the long time $\W_2$-$\W_2$ type propagation of chaos, the entropy-$\W_2^2$ type and $\W_0$-$\W_2$ type propagation of chaos in the non-convex (namely partially dissipative) and multiplicative Brownian motion case are still open, which seem more interesting and challenged.

When there exists a partially dissipative non-interacting drift, instead of the synchronous coupling, the authors in \cite{DEGZ} develop the asymptotic reflection coupling to derive the long time $\widetilde{\W}_1$-$\widetilde{\W}_1$ type propagation of chaos:
\begin{align}\label{L1L1}
&\widetilde{\W}_1((P_t^{[k],N})^\ast\mu^N_0,(P_t^\ast\mu_0)^{\otimes k})\leq c\varepsilon(t)\frac{k}{N}\widetilde{\W}_1(\mu^N_0,\mu_0^{\otimes N})+c\frac{k}{\sqrt{N}},\ \ t\geq 0,1\leq k\leq N
\end{align}
for some constants $c>0$ and $\lim_{t\to\infty}\varepsilon(t)=0$. The asymptotic reflection coupling can date back to \cite{W15}, where it was constructed to study the ergodicity of nonlinear monotone SPDEs. One can also refer to \cite[Theorem 2.11(b)]{LWZ} for propagation of chaos in $\widetilde{\W}_1$ if $\mu_0^N=\mu_0^{\otimes N}$.
The asymptotic reflection coupling is also applied to study the long time behavior of one-dimensional McKean-Vlasov SDEs with common noise in \cite{BW}.

Compared with the above significant progresses on propagation of chaos in Brownian motion noise case, there are fewer results on the propagation of chaos in general L\'{e}vy noise case. \cite{LMW} derives the long time $\widetilde{\W}_1$-$\widetilde{\W}_1$ type propagation of chaos \eqref{L1L1} for interacting particle system driven by L\'{e}vy noise, where the asymptotic refined basic coupling is used. We should point out that in \cite{DEGZ} and \cite[Theorem 1.2]{LMW}, the initial distribution $\mu_0^N$ of $\{X_0^{i,N}\}_{1\leq i\leq N}$ satisfies $\mu_0^N=\tilde{\mu}_0^{\otimes N}$ for some $\tilde{\mu}_0\in\scr P_1(\R^d)$, which together with the fact
$$\widetilde{\W}_1(\tilde{\mu}_0^{\otimes k},\mu_0^{\otimes k})=\frac{k}{N}\widetilde{\W}_1(\tilde{\mu}_0^{\otimes N},\mu_0^{\otimes N})$$
implies that \eqref{L1L1} becomes
\begin{align*}
&\widetilde{\W}_1((P_t^{[k],N})^\ast\tilde{\mu}_0^{\otimes N},(P_t^\ast\mu_0)^{\otimes k})\leq c\varepsilon(t)\widetilde{\W}_1(\tilde{\mu}_0^{\otimes k},\mu_0^{\otimes k})+c\frac{k}{\sqrt{N}},\ \ t\geq 0,1\leq k\leq N.
\end{align*}

However, to our knowledge, the quantitative propagation of chaos in relative entropy in the L\'{e}vy noise even in $\alpha$-stable noise case is still open. The difficulty lies in that the chain rule for $-(-\Delta)^{\frac{\alpha}{2}}$ is not explicit so that the entropy method in \cite{BJW,JW,JW1} seems unavailable in the $\alpha$-stable noise case. To illustrate this point precisely, let $n=d$, $b^i:\R^d\to\R^d,i=1,2$ be measurable and consider
$$\d Y^i_t=b^i(Y_t^i)\d t+\d Z_t.$$
Let $\scr L^Z$ be the generator of $Z_t$. Denote $P_t^i$ the associated semigroup to $Y_t^i$ and
$$\scr L^i=\<b^i,\nabla\>+\scr L^Z,\ \ i=1,2.$$
Let $f\in C_c^\infty(\R^d)$, the set of all smooth functions on $\R^d$ with compact support. Assume that the Kolmogorov forward equation for $P_t^1$ and the Kolmogorov backward equation for $P_t^2$ hold respectively, i.e.
\begin{align}\label{KFB}\frac{\d P_t^1 f}{\d t}=P_t^1 \scr L^1 f,\ \ \frac{\d P_t^2 f}{\d t}=\scr L^2 P_t^2  f.
\end{align}
Let $\Phi\in C^2((0,\infty))$. By formal calculation and \eqref{KFB}, it holds
\begin{align}\label{sem12}\nonumber P_t^1 \Phi(f)-\Phi(P_t^2 f)&=\int_0^t \left(\frac{\d P_s^1 \Phi(P_{t-s}^2f)}{\d s}\right)\d s\\
&=\int_0^t [P_s^1 \{\Phi'(P_{t-s}^2f)\<b^1-b^2,\nabla P_{t-s}^2f\>\}]\d s\\
\nonumber&+\int_0^t [P_s^1\L^Z \{\Phi(P_{t-s}^2f)\}- P_s^1 \{\Phi'(P_{t-s}^2f)(\L^ZP_{t-s}^2f)\}]\d s.
\end{align}
When $\scr L^Z=\Delta$, \eqref{sem12} together with the chain rule
\begin{align}\label{chain}\Delta \{\Phi(f)\}=\Phi''(f)|\nabla f|^2+\Phi'(f)\Delta f
\end{align}
implies that
\begin{align*}P_t^1 \Phi(f)-\Phi(P_t^2 f)
&\leq \int_0^t [P_s^1 \{\Phi'(P_{t-s}^2f)\<b^1-b^2,\nabla P_{t-s}^2f\>\}]\d s\\
&+\int_0^t [P_s^1 \{\Phi''(P_{t-s}^2f)|\nabla P_{t-s}^2f|^2\}]\d s.
\end{align*}
In particular, when $\Phi(x)=\log x$, the Cauchy-Schwartz inequality yields that
\begin{align*}P_t^1 \log f-\log(P_t^2 f)
&\leq \frac{1}{2}\int_0^t  [P_s^1|b^1-b^2|^2]\d s-\frac{1}{2}\int_0^t [P_s^1 \{|P_{t-s}^2f|^{-2}|\nabla P_{t-s}^2f|^2\}]\d s\\
&\leq \frac{1}{2}\int_0^t  [P_s^1|b^1-b^2|^2]\d s.
\end{align*}
This inequality coincides with the estimate derived by the Girsanov transform. One can also refer to \cite{23RW} for the entropy estimates of two diffusion processes with different drifts as well as diffusion coefficients.

Unfortunately, when $\alpha\in(0,2)$, $(-\Delta)^{\frac{\alpha}{2}} \{\Phi(f)\}$ is not so explicit as in \eqref{chain} even for $\Phi(x)=\log x$. Hence, it seems rather hard to derive an entropy estimate of $\alpha$-stable process with two different drifts.
Fortunately, when $\Phi(x)=x$, it follows from \eqref{sem12} that
\begin{align}\label{Dum}P_t^1 f-P_t^2 f=\int_0^t [P_s^1\{\L^1-\L^2\}P_{t-s}^2f]\d s=\int_0^t [P_s^1\{\<b^1-b^2,\nabla P_{t-s}^2f\>\}]\d s.
\end{align}
This may shed light on the possibility to derive the $\W_0$-$\widetilde{\W}_1$ type propagation of chaos in the L\'{e}vy noise case. \eqref{Dum} is called the Duhamel formula in the literature, which can date back to \cite[(3a)]{MS}.



The contribution of this paper is to investigate the long time $\W_0$-$\widetilde{\W}_1$ type propagation of chaos for mean field interacting particle system driven by L\'{e}vy noise, which is weaker than the challenged entropy-$\W_2^2$ type propagations of chaos but also inspiring in this direction. To this end, we will first derive a $\W_0$-$\widetilde{\W}_1$ type propagation of chaos in short time, i.e.
\begin{align*}\|(P_t^{[k],N})^\ast\mu^N_0-(P_t^\ast\mu_0)^{\otimes k}\|_{var}\leq ck\frac{\widetilde{\W}_1(\mu^N_0,\mu_0^{\otimes N})}{N}+ck\Delta(N),\ \ 1\leq k\leq N,t\in[0,T]
\end{align*}
with $\lim_{N\to\infty}\Delta(N)=0$,
and then utilize the long time $\widetilde{\W}_1$-$\widetilde{\W}_1$ propagation of chaos \eqref{L1L1} and the semigroup property
$$(P_{t+s}^{[k],N})^\ast\mu_0^N=(P_{s}^{[k],N})^{\ast}\{(P_{t}^{[N],N})^\ast\mu_0^N\},\ \ P_{t+s}^\ast\mu_0=P_s^\ast P_{t}^\ast\mu_0,\ \ 1\leq k\leq N, s\geq 0,t\geq 0$$
to derive $\W_0$-$\widetilde{\W}_1$ type propagation of chaos in long time.

The paper is organized in the following: In Section 2, we first give a general result on the long time $\W_0$-$\widetilde{\W}_1$ type propagation of chaos, and the theory is then applied in multiplicative Brownian motion case and additive $\alpha$-stable noise case in Section 3 and Section 4 respectively. In Section 5, we will provide some auxiliary lemmas which will be used in the proof of the main results.
\section{A general result on long time $\W_0$-$\widetilde{\W}_1$ type propagation of chaos}
Let $b^{(0)}:\R^d\to\R^d$, $b^{(1)}:\R^d\times\R^d\to\R^d$ and $\sigma:\R^d\to\R^d\otimes\R^{n}$ be measurable and bounded on bounded set. Recall that $(Z^i_t)_{1\le i\le N}$ are i.i.d. $n$-dimensional L\'{e}vy processes. Consider the mean field interacting particle system
\begin{align} \label{al1}
\d X^{i,N}_t=b^{(0)}(X_t^{i,N})\d t+\frac{1}{N}\sum_{m=1}^Nb^{(1)}(X_t^{i,N},X_t^{m,N})\d t+\sigma(X_{t-}^{i,N})\d Z_t^i,\ \ 1\leq i\leq N,
\end{align}
and non-interacting particle system
\begin{equation}\label{al}
 \d X_t^i=b^{(0)}(X_t^i)\d t+\int_{\R^d}b^{(1)}(X_t^i,y)\L_{X_t^i}(\d y)\d t+\sigma(X_{t-}^i)\d Z_t^i,\ \ 1\leq i\leq N.
\end{equation}
We assume that SDEs \eqref{al1} and  \eqref{al} are well-posed. As in Section 1,  let $P_t^\ast\mu_0=\L_{X_t^i}$ with $\L_{X_0^i}=\mu_0\in\scr P(\R^d)$, which is independent of $i$. For simplicity, we denote $\mu_t=P_t^\ast\mu_0$. For any exchangeable $\mu^N\in\scr P((\R^d)^N)$, $1\leq k\leq N$, $(P_t^{[k],N})^\ast\mu^N$
denotes the distribution of $(X_t^{i,N})_{1\leq i\leq k}$ from initial distribution $\mu^N$.

To derive the long time $\W_0$-$\widetilde{\W}_1$ type propagation of chaos,
for any $s\geq 0$, consider the decoupled SDE
\begin{align}\label{deSDE}\d X_{s,t}^{i,\mu,z}&=b^{(0)}(X_{s,t}^{i,\mu,z})\d t+\int_{\R^d}b^{(1)}(X_{s,t}^{i,\mu,z},y)\mu_t(\d y)\d t+\sigma(X_{s,t-}^{i,\mu,z})\d Z_t^i, \ \ t\geq s
\end{align}
with $X_{s,s}^{i,\mu,z}=z\in\R^d$.
Let
$$P_{s,t}^{i,\mu} f(z):=\E f(X_{s,t}^{i,\mu,z}), \ \ f\in \scr B_b(\R^{d}),z\in\R^d,i\geq 1,0\leq s\leq t.$$
We also assume that \eqref{deSDE} is well-posed so that $P_{s,t}^{i,\mu}$ does not depend on $i$ and we denote \begin{align}\label{Pmt}P_{s,t}^{\mu}=P_{s,t}^{i,\mu},\ \ i\geq 1.
 \end{align}Moreover, for any $x=(x^1,x^2,\cdots,x^k)\in (\R^{d})^k, F\in \scr B_b((\R^{d})^k)$, and $(s_1,s_2,\cdots,s_k)\in[0,t]^k$ define
\begin{align}\label{Tenso1}(P_{s_1,t}^\mu\otimes P_{s_2,t}^\mu\otimes\cdots\otimes P_{s_k,t}^\mu) F(x):=\E F(X_{s_1,t}^{1,\mu,x^1},X_{s_2,t}^{2,\mu,x^2},\cdots,X_{s_k,t}^{k,\mu,x^k}).
\end{align}
In particular, we denote
\begin{align}\label{Tenso}(P_{s,t}^\mu)^{\otimes k} F(x):=(P_{s,t}^\mu\otimes P_{s,t}^\mu\otimes\cdots\otimes P_{s,t}^\mu) F(x),\ \ 0\leq s\leq t.
\end{align}
For simplicity, we write $P_{t}^\mu =P_{0,t}^\mu $.
For any $F\in C^1((\R^d)^k)$, $1\leq i\leq k$, $x=(x^1,x^2,\cdots,x^k)\in(\R^d)^k$, let $\nabla_i F(x)$ denote the gradient with respect to $x^i$. We now state the general result.
\begin{thm}\label{POCin} Let $\mu_0^N\in\scr P_{1}((\R^d)^N)$ be exchangeable and $\mu_0\in \scr P_{p}(\R^d)$ for some $p\geq 1$. Assume that the following conditions hold.
\begin{enumerate}
\item[(i)] For any $1\leq k\leq N$, $F\in C_b^2((\R^d)^k)$ with $\|F\|_{\infty}\leq 1$, $t\geq 0$, it holds
\begin{align}\label{DUH0}
\nonumber&\int_{(\R^d)^k}F(x)\{(P_t^{[k],N})^\ast\mu^N_0\}(\d x)-\int_{(\R^d)^k}\{(P_{t}^\mu)^{\otimes k} F\}(x)(\mu^N_0\circ\pi_k^{-1})(\d x)\\
&=\int_0^t\sum_{i=1}^k\int_{(\R^d)^N}\bigg\<B^i_s(x),[\nabla_{i}(P^\mu_{s,t})^{\otimes k}F](\pi_k(x))\bigg\>\{(P_s^{[N],N})^\ast\mu^N_0\}(\d x)\d s
    \end{align}
    with
    $$B^i_s(x)=\frac{1}{N}\sum_{m=1}^Nb^{(1)}(x^i,x^m) -\int_{\R^d}b^{(1)}(x^i,y)\mu_s(\d y),\ \ x=(x^1,x^2,\cdots,x^N)\in(\R^d)^N.$$
\item[(ii)] There exists a measurable function $\varphi:(0,\infty)\to (0,\infty)$ with
$\int_0^T\varphi(s)\d s<\infty, T>0$ such that
\begin{align}\label{gra0t}|\nabla P^\mu_{r,t} f|\leq \varphi((t-r)\wedge1)\|f\|_\infty,\ \ f\in\scr B_b(\R^d), 0\leq r<t.
\end{align}
\item[(iii)] There exist an increasing function $g:(0,\infty)\to (0,\infty)$ and a decreasing $\Delta:(0,\infty)\to (0,\infty)$ with $\lim_{N\to\infty}\Delta(N)=0$ such that
\begin{align}\label{CON}
\nonumber&\int_{(\R^d)^N} \left|B^1_s(x)\right|\{(P_s^{[N],N})^\ast\mu^N_0\}(\d x)\\
&\leq g(s)\left\{\frac{1}{N}\widetilde{\W}_1(\mu^N_0,\mu_0^{\otimes N})+\Delta(N)\{1+\{\mu_0(|\cdot|^{p})\}^{\frac{1}{p}}\}\right\}.
\end{align}
\item[(iv)] There exist functions $\varepsilon:(0,\infty)\to (0,\infty)$ with $\lim_{t\to\infty}\varepsilon(t)=0$ and $\tilde{\Delta}:(0,\infty)\to (0,\infty)$ with $\lim_{N\to\infty}\tilde{\Delta}(N)=0$ such that
\begin{align}\label{CMtty}\nonumber&\widetilde{\W}_1((P_t^{[N],N})^\ast\mu^N_0,(P_t^\ast\mu_0)^{\otimes N})\\
&\leq \varepsilon(t)\widetilde{\W}_1(\mu^N_0,\mu_0^{\otimes N})+\{1+\{\mu_0(|\cdot|^{p})\}^{\frac{1}{p}}\}N\tilde{\Delta}(N),\ \ t\geq 0.
\end{align}
Moreover, there exists a constant $c_0>0$ such that
\begin{align}\label{unf}\sup_{t\geq 0}(P_t^\ast\mu_0)(|\cdot|^{p})<c_0(1+\mu_0(|\cdot|^p)).
\end{align}
\end{enumerate}
 Then there exists a constant $c>0$ independent of $t$ and $N$ such that
\begin{align}\label{CMY}\nonumber&\|(P_t^{[k],N})^\ast\mu^N_0-(P_t^\ast\mu_0)^{\otimes k}\|_{var}\\
&\leq ck\varepsilon(t-1)\frac{\widetilde{\W}_1(\mu^N_0,\mu_0^{\otimes N})}{N}\\
\nonumber&+c\{1+\{\mu_0(|\cdot|^{p})\}^{\frac{1}{p}}\}k(\Delta(N)+\tilde{\Delta}(N)),\ \ 1\leq k\leq N,t\geq 1.
\end{align}
\end{thm}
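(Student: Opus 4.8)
The plan is to first prove a short-time $\W_0$--$\widetilde{\W}_1$ estimate on the unit time interval (for an arbitrary exchangeable initial datum) and then to bootstrap it to all $t\ge 1$ by restarting the interacting system and the limiting McKean--Vlasov flow at time $t-1$ and feeding in the long-time $\widetilde{\W}_1$-contraction \eqref{CMtty}.

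\textbf{Step 1: the short-time estimate.} I would fix $t\in(0,1]$, $1\le k\le N$ and $F\in C^2_b((\R^d)^k)$ with $\|F\|_\infty\le1$. Since $\mu_t=P_t^\ast\mu_0=\mu_0 P^\mu_t$ (the decoupled flow \eqref{deSDE} started at time $0$ coincides in law with the McKean--Vlasov dynamics), one has $(P_t^\ast\mu_0)^{\otimes k}(F)=\mu_0^{\otimes k}\big((P^\mu_t)^{\otimes k}F\big)$, so \eqref{DUH0} writes the difference $(P_t^{[k],N})^\ast\mu^N_0(F)-(P_t^\ast\mu_0)^{\otimes k}(F)$ as
\[
I_1:=\int_{(\R^d)^k}\big\{(P^\mu_t)^{\otimes k}F\big\}\,\d\big(\mu^N_0\circ\pi_k^{-1}-\mu_0^{\otimes k}\big)
\]
plus the Duhamel term $I_2$ on the right-hand side of \eqref{DUH0}. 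For $I_1$ I would condition the $i$-th component on the (independent) other components and apply \eqref{gra0t} with $r=0$ to obtain $|\nabla_i(P^\mu_t)^{\otimes k}F|\le\varphi(t\wedge1)$ for every $i$, so that $(P^\mu_t)^{\otimes k}F$ is $\varphi(t\wedge1)$-Lipschitz for $\rho_1$; Kantorovich duality and the marginal inequality for exchangeable measures (from Section 5) then give $|I_1|\le\varphi(t\wedge1)\,\widetilde{\W}_1(\mu^N_0\circ\pi_k^{-1},\mu_0^{\otimes k})\le\varphi(t\wedge1)\,\tfrac kN\,\widetilde{\W}_1(\mu^N_0,\mu_0^{\otimes N})$. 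For $I_2$, \eqref{gra0t} bounds $|\nabla_i(P^\mu_{s,t})^{\otimes k}F|\le\varphi((t-s)\wedge1)$, the permutation-equivariance of \eqref{al1} makes $(P_s^{[N],N})^\ast\mu^N_0$ exchangeable so that $\int|B^i_s|\,\d(P_s^{[N],N})^\ast\mu^N_0$ is independent of $i$, and \eqref{CON} yields
\[
|I_2|\le k\,g(1)\Big(\int_0^1\!\varphi(u)\,\d u\Big)\Big(\tfrac1N\widetilde{\W}_1(\mu^N_0,\mu_0^{\otimes N})+\Delta(N)\{1+\{\mu_0(|\cdot|^p)\}^{1/p}\}\Big),
\]
using $\int_0^t\varphi((t-s)\wedge1)\,\d s=\int_0^t\varphi(u)\,\d u\le\int_0^1\varphi$ and $g(s)\le g(1)$ for $t\le1$. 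Taking the supremum over $F$ and invoking \eqref{tvc} gives, for every $t\in(0,1]$, a bound of the form $ck\big(\tfrac1N\widetilde{\W}_1(\mu^N_0,\mu_0^{\otimes N})+\Delta(N)\{1+\{\mu_0(|\cdot|^p)\}^{1/p}\}\big)$ with $c$ independent of $t,N,k$. The argument invokes $\mu^N_0,\mu_0$ only through hypotheses (i)--(iii), so it holds verbatim with any exchangeable pair $\nu^N\in\scr P_1((\R^d)^N)$, $\nu\in\scr P_p(\R^d)$ in place of $(\mu^N_0,\mu_0)$ (with $P^\ast_s\nu$ replacing $\mu_s$); this is what Step 2 needs.

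\textbf{Step 2: restart and bootstrap.} For $t\ge1$ I would set $\nu^N:=(P_{t-1}^{[N],N})^\ast\mu^N_0$ and $\nu:=P_{t-1}^\ast\mu_0$. The semigroup property gives $(P_t^{[k],N})^\ast\mu^N_0=(P_1^{[k],N})^\ast\nu^N$ and $(P_t^\ast\mu_0)^{\otimes k}=(P_1^\ast\nu)^{\otimes k}$, so Step 1 at $t=1$ applied to $(\nu^N,\nu)$ gives
\[
\|(P_t^{[k],N})^\ast\mu^N_0-(P_t^\ast\mu_0)^{\otimes k}\|_{var}\le ck\Big(\tfrac1N\widetilde{\W}_1(\nu^N,\nu^{\otimes N})+\Delta(N)\{1+\{\nu(|\cdot|^p)\}^{1/p}\}\Big).
\]
Then I would invoke \eqref{CMtty} at time $t-1$ to bound $\widetilde{\W}_1(\nu^N,\nu^{\otimes N})\le\varepsilon(t-1)\widetilde{\W}_1(\mu^N_0,\mu_0^{\otimes N})+\{1+\{\mu_0(|\cdot|^p)\}^{1/p}\}N\tilde\Delta(N)$, and \eqref{unf} to bound $\nu(|\cdot|^p)=(P_{t-1}^\ast\mu_0)(|\cdot|^p)\le c_0(1+\mu_0(|\cdot|^p))$, hence $\{\nu(|\cdot|^p)\}^{1/p}\le c_0^{1/p}(1+\{\mu_0(|\cdot|^p)\}^{1/p})$. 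Substituting these two estimates and collecting terms produces \eqref{CMY} with a constant depending only on $\int_0^1\varphi$, $g(1)$, $c_0$ and $p$.

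\textbf{Anticipated main difficulty.} The crux is Step 1: one must upgrade the single-component gradient estimate \eqref{gra0t} to a $\rho_1$-Lipschitz bound for the frozen-measure tensor semigroup that is uniform in the remaining $k-1$ coordinates and in $k$, and one must check that the time singularity of $\varphi$ at $s=t$ is integrable against the increasing weight $g$ on the unit interval, so that the short-time constant is genuinely independent of $t$. The bootstrap is deliberately organized around restarting at $t-1$ rather than running the Duhamel expansion from $0$: this confines the decoupled flow to an interval of length one (so that $g$ enters only through $g(1)$, not a growing $g(t)$) and arranges that the $\widetilde{\W}_1$-smallness supplied by \eqref{CMtty} is exactly what multiplies the Duhamel contribution. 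Once Step 1 is available uniformly in the initial datum, Step 2 is a routine composition.
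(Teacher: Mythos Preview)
Your proposal is correct and takes essentially the same approach as the paper: the paper also proves a short-time $\W_0$--$\widetilde\W_1$ estimate (inequality \eqref{vsh}) by combining the Duhamel identity \eqref{DUH0}, the conditioning trick for the tensor gradient (your argument is exactly \eqref{kb2}), exchangeability of $(P_s^{[N],N})^\ast\mu_0^N$, the bound \eqref{CON}, and the marginal inequality $\widetilde\W_1(\mu_0^N\circ\pi_k^{-1},\mu_0^{\otimes k})\le\frac kN\widetilde\W_1(\mu_0^N,\mu_0^{\otimes N})$, and then restarts at time $t-1$ via the semigroup property and feeds in \eqref{CMtty} and \eqref{unf}. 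One cosmetic remark: the marginal inequality you cite is not located in Section~5 (the Appendix contains the law-of-large-numbers lemmas); the paper uses it directly in the proof as a standard fact.
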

\begin{rem} (1) Different from the classical Duhamel formula \eqref{Dum}, the left hand side of \eqref{DUH0} only contains $(P_t^{[k],N})^\ast$ while the right hand side involves in $(P_t^{[N],N})^\ast$ due to the interaction. In Section 3 and Section 4 below, explicit conditions will be presented on the coefficients to ensure \eqref{DUH0}.

(2) In the study of entropy-entropy propagation of chaos in $\cite{BJW,JW,JW1}$ or $\widetilde{\W}_1$-$\widetilde{\W}_1$ type propagation of chaos \eqref{L1L1}, due to the tensor property of relative entropy or the property
\begin{align}\label{teo}\widetilde{\W}_1((P_t^{[k],N})^\ast\mu^N_0,(P_t^\ast\mu_0)^{\otimes k})\leq \frac{k}{N}\widetilde{\W}_1((P_t^{[N],N})^\ast\mu^N_0,(P_t^\ast\mu_0)^{\otimes N}),
\end{align}
one can derive the estimate of $\mathrm{Ent}((P_t^{[N],N})^\ast\mu^N_0| (P_t^\ast\mu_0)^{\otimes N})$ or $\widetilde{\W}_1((P_t^{[N],N})^\ast\mu^N_0,(P_t^\ast\mu_0)^{\otimes N})$ for $N$ particles first and then obtain the local propagation of chaos for $k$ particles with $k\leq N$.
However, \eqref{teo} does not hold if $\widetilde{\W}_1$ is replaced $\W_0$. This is the reason why we directly consider $k$ particles with $k\leq N$ instead of $N$ particles in \eqref{DUH0}.

(3) As a hot topic related to the long time propagation of chaos, the ergodicity for McKean-Vlasov SDEs attracts much attention, see for instance \cite{LMW,Song,W23} and references therein for more details.
\end{rem}
\begin{proof} Let $F\in C_b^2((\R^d)^k)$ with $\|F\|_\infty\leq 1$.
For any $(x^1,x^2,\cdots,x^{i-1},x^{i+1},\cdots,x^k)\in(\R^{d})^{k-1}$, define
\begin{align*}&[\scr T_{s,t}^{x^1,x^2,\cdots,x^{i-1},x^{i+1},\cdots,x^k}] F(z)\\
&=\E F(X_{s,t}^{1,\mu,x^1},X_{s,t}^{2,\mu,x^2},\cdots,X_{s,t}^{i-1,\mu,x^{i-1}}, z,X_{s,t}^{i+1,\mu,x^{i+1}}\cdots,X_{s,t}^{k,\mu,x^k}),\ \ z\in\R^d,\ \ 0\leq s\leq t.
\end{align*}
This together with \eqref{Tenso} and \eqref{gra0t} implies that
\begin{align}\label{kb2}\nonumber|\nabla_{i}(P^\mu_{s,t})^{\otimes k}F|(x^1,x^2,\cdots,x^k)&=|\nabla \{P_{s,t}^\mu[\scr T_{s,t}^{x^1,x^2,\cdots,x^{i-1},x^{i+1},\cdots,x^k}] F\}|(x^i)\\
&\leq \varphi((t-s)\wedge1),\ \ 1\leq i\leq k,0\leq s<t.
\end{align}
Then it follows from \eqref{DUH0}, \eqref{kb2} and the fact that $\{(P_s^{[N],N})^\ast\mu^N_0\}$ is exchangeable that
\begin{align*}
&\left|\int_{(\R^d)^k}F(x)\{(P_t^{[k],N})^\ast\mu^N_0\}(\d x)-\int_{(\R^d)^k}\{(P_{t}^\mu)^{\otimes k} F\}(x)(\mu^N_0\circ\pi_k^{-1})(\d x)\right|\\
&\leq \int_0^t \sum_{i=1}^k\int_{(\R^d)^N} \left|B^i_s(x)\right|\{(P_s^{[N],N})^\ast\mu^N_0\}(\d x)\varphi((t-s)\wedge 1)\d s\\
&= \int_0^t k\int_{(\R^d)^N} \left|B^1_s(x)\right|\{(P_s^{[N],N})^\ast\mu^N_0\}(\d x)\varphi((t-s)\wedge 1)\d s.
\end{align*}
This combined with \eqref{CON} implies that for any $t\geq 0$ and $1\leq k\leq N$,
\begin{align}\label{PTf}
\nonumber&\left|\int_{(\R^d)^k}F(x)\{(P_t^{[k],N})^\ast\mu^N_0\}(\d x)-\int_{(\R^d)^k}\{(P_{t}^\mu)^{\otimes k} F\}(x)(\mu^N_0\circ\pi_k^{-1})(\d x)\right|\\
&\leq \int_0^t\varphi(s\wedge 1)\d s g(t)\left\{\frac{k}{N}\widetilde{\W}_1(\mu^N_0,\mu_0^{\otimes N})+k\Delta(N)\{1+\{\mu_0(|\cdot|^{p})\}^{\frac{1}{p}}\}\right\}.
    \end{align}
On the other hand, for any $\tilde{\pi}\in \mathbf{C}(\mu^N_0\circ\pi_k^{-1},\mu_0^{\otimes k})$, we conclude
\begin{align*}&\left|\int_{(\R^d)^k}\{(P_{t}^\mu)^{\otimes k} F\}(x)(\mu^N_0\circ\pi_k^{-1})(\d x)-\int_{(\R^d)^k}\{(P^\mu_{t})^{\otimes k}F\}(x)\mu_0^{\otimes k}(\d x)\right|\\
&\leq \int_{(\R^d)^k\times(\R^d)^k}|\{(P^\mu_{t})^{\otimes k}F\}(x)-\{(P^\mu_{t})^{\otimes k}F\}(y)|\tilde{\pi}(\d x,\d y).
\end{align*}
This together with \eqref{kb2} and
$$\widetilde{\W}_1(\mu^N_0\circ\pi_k^{-1},\mu_0^{\otimes k})\leq \frac{k}{N}\widetilde{\W}_1(\mu^N_0,\mu_0^{\otimes N})$$
implies that
\begin{align*}&\left|\int_{(\R^d)^k}\{(P_{t}^\mu)^{\otimes k} F\}(x)(\mu^N_0\circ\pi_k^{-1})(\d x)-\int_{(\R^d)^k}\{(P^\mu_{t})^{\otimes k}F\}(x)\mu_0^{\otimes k}(\d x)\right|\\
&\leq \varphi(t\wedge 1)\widetilde{\W}_1(\mu^N_0\circ\pi_k^{-1},\mu_0^{\otimes k})\leq \varphi(t\wedge1) \frac{k}{N}\widetilde{\W}_1(\mu^N_0,\mu_0^{\otimes N}).
\end{align*}
Finally, it follows from \eqref{PTf} as well as the triangle inequality that
\begin{align}\label{vsh}\nonumber&\|(P_{t}^{[k],N})^\ast\mu_0^N-(P_{t}^\ast\mu_0)^{\otimes k}\|_{var}\\
&\leq \left(\int_0^t\varphi(s\wedge1)\d s g(t)+\varphi(t\wedge1)\right)\frac{k}{N}\widetilde{\W}_1(\mu_0^N,\mu_0^{\otimes N})\\
\nonumber&+\int_0^t\varphi(s\wedge1)\d s g(t)k\Delta(N)\{1+\{\mu_0(|\cdot|^{p})\}^{\frac{1}{p}}\}.
\end{align}
By the definition of $(P_{t}^{[k],N})^\ast$ and $P_t^\ast$, we derive from \eqref{margi} that  \begin{align*}&(P_{t+s}^{[k],N})^\ast\mu_0^N=\{(P_{t+s}^{[N],N})^\ast\mu_0^N\}\circ\pi_k^{-1}= \left\{(P_{s}^{[N],N})^\ast\{(P_{t}^{[N],N})^\ast\mu_0^N\}\right\}\circ\pi_k^{-1}\\
&\qquad\qquad\qquad\qquad\qquad\qquad\quad= (P_{s}^{[k],N})^{\ast}\{(P_{t}^{[N],N})^\ast\mu_0^N\},\\
&\ \ P_{t+s}^\ast\mu_0=P_s^\ast P_{t}^\ast\mu_0,\ \ 1\leq k\leq N, s\geq 0,t\geq 0.
\end{align*}
Then for any $t>1$, we derive from \eqref{vsh} for $t=1$ that
\begin{align*}&\|(P_{t}^{[k],N})^\ast\mu_0^N-(P_{t}^\ast\mu_0)^{\otimes k}\|_{var}=\|(P_{1}^{[k],N})^{\ast}\{(P_{t-1}^{[N],N})^\ast\mu_0^N\}-(P_{1}^\ast P_{t-1}^\ast\mu_0)^{\otimes k}\|_{var}\\
&\leq \left(\int_0^1\varphi(s)\d s g(1)+\varphi(1)\right)\frac{k}{N}\widetilde{\W}_1((P_{t-1}^{[N],N})^\ast\mu_0^N,(P_{t-1}^\ast\mu_0)^{\otimes N})\\
&+\int_0^1\varphi(s)\d s g(1)k\Delta(N)\{1+\{[P_{t-1}^\ast\mu_0](|\cdot|^{p})\}^{\frac{1}{p}}\}.
\end{align*}
This combined with \eqref{CMtty} and \eqref{unf} gives \eqref{CMY}.
\end{proof}
\section{Application in Brownian motion noise case}
In \eqref{al1} and \eqref{al}, let $$Z_t^i=(W_t^i,B_t^i),\ \ i\geq 1,$$ where $\{W_t^i\}_{i\geq 1}$ are independent $d$-dimensional Brownian motions, $\{ B_t^i\}_{i\geq1}$ are independent $n$-dimensional Brownian motions and $\{W_t^i\}_{i\geq 1}$ is independent of $\{B_t^i\}_{i\geq1}$. Let $\beta>0$,  $b^{(0)}$ and $b^{(1)}$ be defined in Section 2 and $\sigma:\R^d\to\R^d\otimes\R^{n}$ be measurable and bounded on bounded set. Consider
\begin{align} \label{al10}
\nonumber\d X^{i,N}_t&=b^{(0)}(X_t^{i,N})\d t+\frac{1}{N}\sum_{m=1}^Nb^{(1)}(X_t^{i,N},X_t^{m,N})\d t\\
&\qquad\quad+\sqrt{\beta}\d W_t^i+\sigma(X_t^{i,N})\d B_t^i,\ \ 1\leq i\leq N,
\end{align}
and independent McKean-Vlasov SDEs:
\begin{equation}\label{al0}
 \d X_t^i=b^{(0)}(X_t^i)\d t+\int_{\R^d}b^{(1)}(X_t^i,y)\L_{X_t^i}(\d y)\d t+\sqrt{\beta}\d W_t^i+\sigma(X_t^{i})\d B_t^i, \ \ 1\leq i\leq N.
\end{equation}
To derive the long time $\W_0$-$\widetilde{\W}_1$ type propagation of chaos, we make the following assumptions.
\begin{enumerate}
\item[{\bf(A)}] There exists a constant $K_\sigma>0$ such that
\begin{align}\label{bslipsg}
&\frac{1}{2}\|\sigma(x_1)-\sigma(x_2)\|^2_{HS}\leq K_\sigma|x_1-x_2|^2, \ \ x_1,x_2\in\R^d.
\end{align}
$b^{(0)}$ is continuous and there exist $R>0$, $K_1\geq 0, K_2>0$  such that
\begin{align}\label{pdi}
&\langle x_1-x_2, b^{(0)}(x_1)-b^{(0)}(x_2)\rangle\leq \gamma(|x_1-x_2|)|x_1-x_2|
\end{align}
with
$$\gamma(r)=\left\{
  \begin{array}{ll}
K_1r, & \hbox{$r\leq R$;} \\
    \left\{-\frac{K_1+K_2}{R}(r-R)+K_1\right\}r, & \hbox{$R\leq r\leq 2R$;} \\
    -K_2r, & \hbox{$r>2R$.}
  \end{array}
\right.
$$
Moreover, there exists $K_b\geq0$ such that
\begin{align}\label{lip1a}|b^{(1)}(x,y)-b^{(1)}(\tilde{x},\tilde{y})|\leq K_b(|x-\tilde{x}|+|y-\tilde{y}|),\ \ x,\tilde{x},y,\tilde{y}\in\R^d.
\end{align}
\end{enumerate}
\begin{rem}\label{mtd} Under {\bf (A)}, \eqref{al10} and \eqref{al0} are well-posed.
By \eqref{pdi}, $b^{(0)}$ is dissipative in long distance and it is equivalent to that there exist $C_1\geq0, C_2>0,r_0>0$ such that for any $x_1,x_2\in\R^d$,
\begin{align*}
&\langle x_1-x_2, b^{(0)}(x_1)-b^{(0)}(x_2)\rangle\leq C_1|x_1-x_2|^2 1_{\{|x_1-x_2|\leq r_0\}}-C_2|x_1-x_2|^21_{\{|x_1-x_2|>r_0\}}.
\end{align*}
\end{rem}

Now, we are in the position to give the main result and provide its proof.
\begin{thm}\label{POC10}
Assume {\bf(A)}. Let $\mu_0\in \scr P_{1+\delta}(\R^d)$ for some $\delta\in(0,1)$ and $\mu_0^N\in\scr P_1((\R^d)^N)$ be exchangeable.
If \begin{align}\label{kb-kd}
K_b<\frac{2\beta^2}{(K_2-K_\sigma)\left(\int_0^\infty s\e^{\frac{1}{2\beta}\int_0^s\{\gamma(v)+K_\sigma v\}\d v}\d s\right)^2},
 \end{align}
 then there exists a positive constant $c$ independent of $k$ and $N$ such that
\begin{align*}&\|(P_t^{[k],N})^\ast\mu^N_0-(P_t^\ast\mu_0)^{\otimes k}\|_{var}\\
&\leq kc\e^{-ct}\frac{\widetilde{\W}_1(\mu_0^N,\mu_0^{\otimes N})}{N}+c\{1+\{\mu_0(|\cdot|^{1+\delta})\}^{\frac{1}{1+\delta}}\}kN^{-\frac{\delta}{1+\delta}},\ \ 1\leq k\leq N,t\geq 1.
\end{align*}
\begin{rem} (1) Compared with the result in \cite{DEGZ}, the noise can be allowed to be multiplicative in Theorem \ref{POC10}.

(2) When $\sigma=0$, $b^{(0)},b^{(1)}\in C^1$, $b^{(0)}=\nabla V_1+\nabla V_2$, $b^{(1)}(x,y)=\nabla_x W(x,y)$, $V_1$ is $\rho$-strongly convex and the coefficients satisfy  $$\|V_2\|_\infty+\|W\|_\infty+\|\nabla_x W\|_\infty<\infty,\ \ \beta>\frac{8}{\rho}\|\nabla_x W\|_\infty\exp\{2\frac{\|V_2\|_\infty+2\|W\|_\infty}{\beta}\},$$ the density $u_0$ of $\mu_0$ with respect to the invariant probability measure of $\L_{X_t^i}$ satisfies $\log u_0=\bar{u}+\tilde{u}$ for bounded $\bar{u}$ and Lipschitz continuous $\tilde{u}$, and the initial distribution $\mu_0$ and $\mu_0^N$ have finite moments of all orders, \cite[Corollary 3.8]{MRW} derives the sharp long time entropy-entropy propagation of chaos with rate $\frac{k^2}{N^2}$, which implies the sharp long time $\W_0$-entropy type propagation of chaos \eqref{e-e1} with rate $\frac{k}{N}$. In Theorem \ref{POC10} above, $b^{(1)}$ is allowed to be unbounded, and $\mu_0\in \scr P_{1+\delta}(\R^d)$ for some $\delta\in(0,1)$ and $\mu_0^N\in\scr P_1((\R^d)^N)$. So, Theorem \ref{POC10} is not covered in \cite[Corollary 3.8]{MRW}.

(3) From the comparison above, one of the advantage of coupling method is to allow weaker conditions on initial distribution, i.e. $\mu_0^N$ can be singular with $\mu_0^{\otimes N}$ and the coefficients to be more general, whereas the cost is to reduce the rate of propagation of chaos since we need to estimate
\begin{align*}\E\left|\frac{1}{N}\sum_{m=1}^N b^{(1)}(X_s^{i},X_s^{m})-\int_{\R^d}b^{(1)}(X_s^{i},y)\L_{X_s^i}(\d y)\right|,
\end{align*}
which is provided in Lemma \ref{CTY} below, and the central limit theorem tells that the sharp rate is $N^{\frac{1}{2}}$ when $\L_{X_s^i}$ has finite second moment.
\end{rem}
\end{thm}
\begin{proof} Firstly, it follows from \eqref{tvc} that for any $\gamma^n\to \gamma$ and $\zeta^n\to\zeta$ weakly in $\scr P((\R^d)^k)$ as $n\to\infty$,
\begin{align}\label{upl}\nonumber\|\gamma-\zeta\|_{var}=\sup_{|f|\leq 1,f\in C_b((\R^d)^k)}|\gamma(f)-\zeta(f)|&=\sup_{|f|\leq 1,f\in C_b((\R^d)^k)}\lim_{n\to\infty}|\gamma^n(f)-\zeta^n(f)|\\
&\leq \liminf_{n\to\infty}\|\gamma^n-\zeta^n\|_{var}.
\end{align}
Combining \eqref{upl} with Lemma \ref{yap} below, we may and do
assume that there exists a constant $K_0>0$ such that
\begin{align}\label{bslip}
&|b^{(0)}(x_1)-b^{(0)}(x_2)|\leq K_0|x_1-x_2|, \ \ x_1,x_2\in\R^d.
\end{align}

Next, we intend to verify the conditions (i)-(iv) in Theorem \ref{POCin} one by one.

(1)
Take $\F_0$-measurable random variables $(X_0^{i,N})_{1\leq i\leq N}$ and $(X_0^{i})_{1\leq i\leq N}$ such that $$\L_{(X_0^{i,N})_{1\leq i\leq N}}=\mu_0^N,\ \ \L_{(X_0^{i})_{1\leq i\leq N}}=\mu_0^{\otimes N}.$$
Fix $t>0$. Recall $P_{s,t}^\mu$ and $(P_{s,t}^\mu)^{\otimes k}$ are defined in \eqref{Pmt} and \eqref{Tenso} respectively. By \eqref{bslip}, \eqref{bslipsg}, \eqref{lip1a} and the fact that $\beta\neq0$, the backward Kolmogorov equation
\begin{align}\label{BKE}\frac{\d P_{s,t}^\mu f}{\d s}=-\scr L_s^\mu P^\mu_{s,t}f,\ \ f\in C_b^2(\R^d),\|f\|_\infty\leq1, s\in[0,t]
\end{align}
holds, here
$$\scr L_s^\mu=\<b^{(0)},\nabla\>+\left\<\int_{\R^d}b^{(1)}(\cdot,y)\mu_s(\d y),\nabla\right\>+\frac{1}{2}\mathrm{Tr}[(\beta I_{d\times d}+\sigma\sigma^\ast)\nabla^2].$$
Recall that for any $F\in C^1((\R^d)^k)$, $1\leq i\leq k$ and $x=(x^1,x^2,\cdots,x^k)\in(\R^d)^k$, $\nabla_i F(x)$ represents the gradient with respect to $x^i$. Simply denote $\nabla_i^2=\nabla_i\nabla_i$.

Next, we fix $F\in C_b^2((\R^d)^k)$ with $\|F\|_\infty\leq 1$. Define
\begin{align*}(\scr L_s^\mu)^{i}F(x)
&=\<b^{(0)}(x^i),\nabla _iF(x)\>+\left\<\int_{\R^d}b^{(1)}(x^i,y)\mu_s(\d y),\nabla_i F(x)\right\>\\
&\quad+\frac{1}{2}\mathrm{Tr}[(\beta I_{d\times d}+(\sigma\sigma^\ast)(x^i))\nabla^2_iF(x)],\ \ x=(x^1,x^2,\cdots,x^k)\in(\R^{d})^k,1\leq i\leq k,
\end{align*}
and
\begin{align*}(\scr L_s^\mu)^{\otimes k}F(x)
&=\sum_{i=1}^k(\scr L_s^\mu)^{i}F(x),\ \ x=(x^1,x^2,\cdots,x^k)\in(\R^{d})^k.
\end{align*}
 We now claim that
\begin{align}\label{KOL}\frac{\d (P_{s,t}^\mu)^{\otimes k} F}{\d s}=-(\scr L_s^\mu)^{\otimes k} (P^\mu_{s,t})^{\otimes k}F, \ \ s\in[0,t].
\end{align}
In fact, for any $(s_1,s_2,\cdots,s_k)\in[0,t]^k$ and $x=(x^1,x^2,\cdots,x^k)\in(\R^{d})^k$, define
 $$\Psi_F(s_1,s_2,\cdots,s _k,x)=(P_{s_1,t}^\mu\otimes P_{s_2,t}^\mu\otimes\cdots\otimes P_{s_k,t}^\mu) F(x),$$
and
\begin{align*}&\{[\scr T_{s_1,s_2,\cdots,s_{i-1},s_{i+1},\cdots,s_k,t}^{x^1,x^2,\cdots,x^{i-1},x^{i+1},\cdots,x^k}] F\}(z)\\
&=\E F(X_{s_1,t}^{1,\mu,x^1},X_{s_2,t}^{2,\mu,x^2},\cdots,X_{s_{i-1},t}^{i-1,\mu,x^{i-1}}, z,X_{s_{i+1},t}^{i+1,\mu,x^{i+1}}\cdots,X_{s_k,t}^{k,\mu,x^k}),\ \ z\in\R^d, 1\leq i\leq k,
\end{align*}
where $(P_{s_1,t}^\mu\otimes P_{s_2,t}^\mu\otimes\cdots\otimes P_{s_k,t}^\mu)$ is given in \eqref{Tenso1}.
Then it follows from Fubini's theorem that
$$\Psi_F(s_1,s_2,\cdots,s _k,x)=P_{s_i,t}^\mu\{[\scr T_{s_1,s_2,\cdots,s_{i-1},s_{i+1},\cdots,s_k,t}^{x^1,x^2,\cdots,x^{i-1},x^{i+1},\cdots,x^k}] F\}(x^i),\ \ 1\leq i\leq k.$$
Combining this with \eqref{BKE}, the definition of $(\scr L_{s_i}^\mu)^{i}$ and Fubini's theorem, we conclude that
$$\frac{\partial}{\partial s_i}\Psi_F(s_1,s_2,\cdots,s _k,x)=-(\scr L_{s_i}^\mu)^{i}(P_{s_1,t}^\mu\otimes P_{s_2,t}^\mu\otimes\cdots\otimes P_{s_k,t}^\mu) F,\ \ 1\leq i\leq k, $$
which together with the fact $(P_{s,t}^\mu)^{\otimes k} F(x)=\Psi_F(s,s,\cdots,s,x)$ and the definition of $(\scr L_s^\mu)^{\otimes k}$ yields  \eqref{KOL}.

Now, we are in the position to prove condition (i) in Theorem \ref{POCin}. Recall that $$B^i_s(x)=\frac{1}{N}\sum_{m=1}^Nb^{(1)}(x^i,x^m) -\int_{\R^d}b^{(1)}(x^i,y)\mu_s(\d y),\ \ x=(x^1,x^2,\cdots,x^N)\in(\R^d)^N.$$
Combining \eqref{KOL} with It\^{o}'s formula, for any $s\in[0,t]$, we have
\begin{align*}
&\d [(P_{s,t}^\mu)^{\otimes k} F](X_s^{1,N},X_s^{2,N},\cdots,X_s^{k,N})\\
&=\left[-(\scr L_s^\mu)^{\otimes k} (P^\mu_{s,t})^{\otimes k}F\right](X_s^{1,N},X_s^{2,N},\cdots,X_s^{k,N})\d s\\
&+\sum_{i=1}^k\<b^{(0)}(X_s^{i,N}),\nabla _i[(P_{s,t}^\mu)^{\otimes k} F](X_s^{1,N},X_s^{2,N},\cdots,X_s^{k,N})\>\d s\\
&+\sum_{i=1}^k\left\<\frac{1}{N}\sum_{m=1}^Nb^{(1)}(X_s^{i,N},X_s^{m,N}),\nabla_i [(P_{s,t}^\mu)^{\otimes k} F](X_s^{1,N},X_s^{2,N},\cdots,X_s^{k,N})\right\>\d s\\
&+\frac{1}{2}\sum_{i=1}^k\mathrm{Tr}[(\beta I_{d\times d}+(\sigma\sigma^\ast)(X_s^{i,N}))\nabla^2_i[(P_{s,t}^\mu)^{\otimes k} F](X_s^{1,N},X_s^{2,N},\cdots,X_s^{k,N})]\d s+\d M_s\\
&=\sum_{i=1}^k\left\<B_s^i(X_s^{1,N},X_s^{2,N},\cdots,X_s^{N,N}),[\nabla_{i}(P^\mu_{s,t})^{\otimes k}F](X_s^{1,N},X_s^{2,N},\cdots,X_s^{k,N})\right\>\d s+\d M_s
\end{align*}
for some martingale $M_s$.
Integrating with respect to $s$ from $0$ to $t$ and taking expectation, we arrive at
\begin{align}\label{DUH}
\nonumber&\int_{(\R^d)^k}F(x)\{(P_t^{[k],N})^\ast\mu^N_0\}(\d x)-\int_{(\R^d)^k}\{(P_{t}^\mu)^{\otimes k} F\}(x)(\mu^N_0\circ\pi_k^{-1})(\d x)\\
&=\int_0^t\sum_{i=1}^k\int_{(\R^d)^N}\bigg\<B^i_s(x),[\nabla_{i}(P^\mu_{s,t})^{\otimes k}F](\pi_k(x))\bigg\>\{(P_s^{[N],N})^\ast\mu^N_0\}(\d x)\d s.
    \end{align}
The condition (i) in Theorem \ref{POCin} follows.

(2) By \eqref{bslipsg}-\eqref{lip1a} and \cite[Theorem 4.1 (1)]{W18} for $\kappa_2(t)=0$, there exists a constant $c_0>0$ independent of $K_0$ such that
\begin{align}\label{gra}|\nabla P^\mu_{r,t} f|\leq \frac{c_0}{(t-r)^{1/2}\wedge1}\|f\|_\infty,\ \ 0\leq r<t, f\in \scr B_b(\R^d).
\end{align}
One can also refer to \cite[Corollary 3.5]{PW} for \eqref{gra} in the time homogeneous case and \cite[Theorem 1.1 (1)]{W11} for log-Harnack inequality. Hence, condition (ii) in Theorem \ref{POCin} holds.

(3) Firstly, \eqref{pdi} implies that
\begin{align}\label{pdia}
&\langle x_1-x_2, b^{(0)}(x_1)-b^{(0)}(x_2)\rangle\leq K_1|x_1-x_2|^2,\ \ x_1,x_2\in\R^d.
\end{align}
Firstly, it is standard to derive from \eqref{bslipsg}, \eqref{lip1a} and \eqref{pdia} that
\begin{align}\label{mot}
\E((1+|X_t^1|^2)^{\frac{1+\delta}{2}})\leq c_0(t)\mu_0(1+|\cdot|^{1+\delta}),\ \ t\geq 0
\end{align}
for some increasing function $c_0:[0,\infty)\to[0,\infty)$.
Let $Z_t^{i,N}=X_t^{i}-X_t^{i,N}$. By the It\^{o}-Tanaka formula, or equivalently using $\psi_{\varepsilon}(x)=\sqrt{x^2+\varepsilon}$ to approximate $|x|$ as $\varepsilon\to0$, \eqref{bslipsg}, \eqref{lip1a} and \eqref{pdia}, we derive
\begin{align*}
\d |Z_t^{i,N}|&\leq \left\<b^{0}(X_t^{i})-b^{0}(X_t^{i,N}),\frac{Z_t^{i,N}}{|Z_t^{i,N}|}1_{\{|Z_t^{i,N}|\neq 0\}}\right\>\d t\\
&+\frac{1}{2}\|\sigma(X_t^{i,N})-\sigma(X_t^{i})\|_{HS}^2\frac{1}{|Z_t^{i,N}|}1_{\{|Z_t^{i,N}|\neq 0\}}\d t\\
&+K_b|Z_t^{i,N}|\d t+\frac{1}{N}\sum_{m=1}^N K_b|Z_t^{m,N}|\d t\\
&+\left|\frac{1}{N}\sum_{m=1}^N b^{(1)}(X_t^{i},X_t^{m})-\int_{\R^d}b^{(1)}(X_t^{i},y)\mu_t(\d y)\right|\d t\\
&+\left\<[\sigma(X_t^{i})-\sigma(X_t^{i,N})]\d B_t^i,\frac{Z_t^{i,N}}{|Z_t^{i,N}|}1_{\{|Z_t^{i,N}|\neq 0\}}\right\>\\
&\leq K_1|Z_t^{i,N}|\d t+K_b|Z_t^{i,N}|\d t+K_\sigma|Z_t^{i,N}|\d t+\frac{1}{N}\sum_{m=1}^N K_b|Z_t^{m,N}|\d t\\
&+\left|\frac{1}{N}\sum_{m=1}^N b^{(1)}(X_t^{i},X_t^{m})-\int_{\R^d}b^{(1)}(X_t^{i},y)\mu_t(\d y)\right|\d t\\
&+\left\<[\sigma(X_t^{i})-\sigma(X_t^{i,N})]\d B_t^i,\frac{Z_t^{i,N}}{|Z_t^{i,N}|}1_{\{|Z_t^{i,N}|\neq 0\}}\right\>,
\end{align*}
where we used the fact \begin{align*}&\left\<\int_{\R^d}b^{(1)}(X_t^{i},y)\mu_t(\d y)-\frac{1}{N}\sum_{m=1}^N b^{(1)}(X_t^{i,N},X_t^{m,N}), \frac{Z_t^{i,N}}{|Z_t^{i,N}|}1_{\{|Z_t^{i,N}|\neq 0\}}\right\>\\
&\leq \left|\frac{1}{N}\sum_{m=1}^N b^{(1)}(X_t^{i,N},X_t^{m,N})-\frac{1}{N}\sum_{m=1}^N b^{(1)}(X_t^{i},X_t^{m})\right|\\
&+\left|\frac{1}{N}\sum_{m=1}^N b^{(1)}(X_t^{i},X_t^{m})-\int_{\R^d}b^{(1)}(X_t^{i},y)\mu_t(\d y)\right|\\
&\leq K_b|Z_t^{i,N}|+\frac{1}{N}\sum_{m=1}^N K_b|Z_t^{m,N}|+\left|\frac{1}{N}\sum_{m=1}^N b^{(1)}(X_t^{i},X_t^{m})-\int_{\R^d}b^{(1)}(X_t^{i},y)\mu_t(\d y)\right|.
\end{align*}
Moreover, Lemma \ref{CTY} below and \eqref{mot} imply that we can find an increasing function $c:[0,\infty)\to[0,\infty)$ such that
\begin{align}\label{LAR}\left|\frac{1}{N}\sum_{m=1}^N b^{(1)}(X_t^{i},X_t^{m})-\int_{\R^d}b^{(1)}(X_t^{i},y)\mu_t(\d y)\right|\leq c(t)\{1+\{\mu_0(|\cdot|^{1+\delta})\}^{\frac{1}{1+\delta}}\}N^{-\frac{\delta}{1+\delta}}.
\end{align}
Applying Gronwall's inequality and \eqref{LAR}, we get
\begin{align}\label{myt}
\nonumber&\sum_{i=1}^N\E|Z_s^{i,N}|\\
&\leq \e^{(K_1+2K_b+K_\sigma)s}\sum_{i=1}^N\E|Z_0^{i,N}|+ \e^{(K_1+2K_b+K_\sigma)s}sc(s)\{1+\{\mu_0(|\cdot|^{1+\delta})\}^{\frac{1}{1+\delta}}\}NN^{-\frac{\delta}{1+\delta}}.
\end{align}
Since $\{X_s^{i,N}\}_{i=1}^N$ are exchangeable, we derive from \eqref{myt} and \eqref{LAR} that
\begin{align*}
&\int_{(\R^d)^N} \left|B^1_s(x)\right|\{(P_s^{[N],N})^\ast\mu^N_0\}(\d x)\\
&=\frac{1}{N}\sum_{i=1}^N\E \left|B^i_s(X_s^{1,N}, X_s^{2,N},\cdots,X_s^{N,N})\right|\\
&=\frac{1}{N}\sum_{i=1}^N\E\left|\frac{1}{N}\sum_{m=1}^N b^{(1)}(X_s^{i,N},X_s^{m,N})-\int_{\R^d}b^{(1)}(X_s^{i,N},y)\mu_s(\d y)\right|\\
&\leq\frac{1}{N}\sum_{i=1}^N\E \bigg|\frac{1}{N}\sum_{m=1}^N b^{(1)}(X_s^{i,N},X_s^{m,N})-\int_{\R^d}b^{(1)}(X_s^{i,N},y)\mu_s(\d y)\\
&\qquad\qquad\quad-\bigg(\frac{1}{N}\sum_{m=1}^N b^{(1)}(X_s^{i},X_s^{m})-\int_{\R^d}b^{(1)}(X_s^{i},y)\mu_s(\d y)\bigg)\bigg|\\
&+\frac{1}{N}\sum_{i=1}^N\E\left|\frac{1}{N}\sum_{m=1}^N b^{(1)}(X_s^{i},X_s^{m})-\int_{\R^d}b^{(1)}(X_s^{i},y)\mu_s(\d y)\right|\\
&\leq 3K_b\frac{1}{N}\sum_{i=1}^N\E|X_s^{i,N}-X_s^{i}|+ c(s)\{1+\{\mu_0(|\cdot|^{1+\delta})\}^{\frac{1}{1+\delta}}\}N^{-\frac{\delta}{1+\delta}}\\
&\leq 3K_b\e^{(K_1+2K_b+K_\sigma)s}\frac{1}{N}\sum_{i=1}^N\E|Z_0^{i,N}|\\
&+ \{3K_b\e^{(K_1+2K_b+K_\sigma)s}sc(s) +c(s)\}\{1+\{\mu_0(|\cdot|^{1+\delta})\}^{\frac{1}{1+\delta}}\}N^{-\frac{\delta}{1+\delta}}.
\end{align*}
Letting $$g(s)=\max\{3K_b\e^{(K_1+2K_b+K_\sigma)s},3K_b\e^{(K_1+2K_b+K_\sigma)s}sc(s) +c(s)\},$$  and taking infimum with respect to $(X_0^{i,N},X_0^i)_{1\leq i\leq N}$ with $\L_{(X_0^{i,N})_{1\leq i\leq N}}=\mu_0^N, \L_{(X_0^{i})_{1\leq i\leq N}}=\mu_0^{\otimes N}$, we get
\begin{align}\label{reu}\nonumber&\int_{(\R^d)^N} \left|B^1_s(x)\right|\{(P_s^{[N],N})^\ast\mu^N_0\}(\d x)\\
&\leq g(s)\left\{\frac{1}{N}\widetilde{\W}_1(\mu^N_0,\mu_0^{\otimes N})+\{1+\{\mu_0(|\cdot|^{1+\delta})\}^{\frac{1}{1+\delta}}\}N^{-\frac{\delta}{1+\delta}}\right\}.
\end{align}
Therefore, we obtain condition (iii) in Theorem \ref{POCin}.

(4) To verify condition (iv) in Theorem \ref{POCin} under {\bf (A)}, we adopt the technique of asymptotic reflection coupling. For any $\varepsilon\in(0,1]$, let $\pi_R^\varepsilon\in[0,1]$ and $\pi_S^\varepsilon$ be two Lipschitz continuous function on $[0,\infty)$ satisfying
\begin{align}\label{pir}\pi_R^\varepsilon(x)=\left\{
      \begin{array}{ll}
        1, & \hbox{$x\geq \varepsilon$;} \\
        0, & \hbox{$x\leq \frac{\varepsilon}{2}$}
      \end{array}
    \right.,\ \ (\pi_R^\varepsilon)^2+(\pi_S^\varepsilon)^2=1.
\end{align}
Let $\{\tilde{W}^i_t\}_{i\geq 1}$ be independent Brownian motions and independent of $\{W_t^i,B_t^i\}_{i\geq 1}$. Construct
\begin{equation*}\begin{split}
\d \tilde{X}_t^{i}&=b^{(0)}(\tilde{X}_t^i)\d t+\int_{\R^d}b^{(1)}(\tilde{X}_t^i,y)\mu_t(\d y)\d t\\
&+\sqrt{\beta}\pi_R^\varepsilon(|\tilde{Z}_t^{i,N}|)\d
W_t^i+\sqrt{\beta}\pi_S^\varepsilon(|\tilde{Z}_t^{i,N}|)\d \tilde{W}^i_t+\sigma(\tilde{X}_t^{i})\d B_t^i,
\end{split}\end{equation*}
and
\begin{equation*}\begin{split}
\d \tilde{X}_t^{i,N}&=b^{(0)}(\tilde{X}_t^{i,N})\d t+\frac{1}{N}\sum_{m=1}^Nb^{(1)}(\tilde{X}_t^{i,N},\tilde{X}_t^{m,N})\d t\\
&+\sqrt{\beta}\pi_R^\varepsilon(|\tilde{Z}_t^{i,N}|)(I_{d\times d}-2\tilde{U}_t^{i,N}\otimes \tilde{U}_t^{i,N})\d
W_t^i\\
&+\sqrt{\beta}\pi_S^\varepsilon(|\tilde{Z}_t^{i,N}|)\d \tilde{W}^i_t
+\sigma(\tilde{X}_t^{i,N})\d B_t^i,
\end{split}\end{equation*}
where  $\tilde{Z}_t^{i,N}=\tilde{X}_t^i-\tilde{X}_t^{i,N}$, $\tilde{U}_t^{i,N}=\frac{\tilde{Z}_t^{i,N}}{|\tilde{Z}_t^{i,N}|}1_{\{|\tilde{Z}_t^{i,N}|\neq 0\}}$ and $\L_{(\tilde{X}_0^{i,N})_{1\leq i\leq N}}=\mu_0^N$ and $\L_{(\tilde{X}_0^{i})_{1\leq i\leq N}}=\mu_0^{\otimes N}$.
By the It\^{o}-Tanaka formula, \eqref{bslipsg}, \eqref{pdi} and \eqref{lip1a}, we have
\begin{align*}
\d |\tilde{Z}_t^{i,N}|&\leq \left\<b^{0}(\tilde{X}_t^{i})-b^{0}(\tilde{X}_t^{i,N}),\frac{\tilde{Z}_t^{i,N}} {|\tilde{Z}_t^{i,N}|}1_{\{|\tilde{Z}_t^{i,N}|\neq 0\}}\right\>\d t \\ &+\frac{1}{2}\|\sigma(\tilde{X}_t^{i,N})-\sigma(\tilde{X}_t^{i})\|_{HS}^2\frac{1}{|\tilde{Z}_t^{i,N}|}1_{\{|\tilde{Z}_t^{i,N}|\neq 0\}}\d t\\
&+K_b|\tilde{Z}_t^{i,N}|\d t+\frac{1}{N}\sum_{m=1}^N K_b|\tilde{Z}_t^{m,N}|\d t\\
&+\left|\frac{1}{N}\sum_{m=1}^N b^{(1)}(\tilde{X}_t^{i},\tilde{X}_t^{m})-\int_{\R^d}b^{(1)}(\tilde{X}_t^{i},y)\mu_t(\d y)\right|\d t\\
&+\left\<[\sigma(\tilde{X}_t^{i})-\sigma(\tilde{X}_t^{i,N})]\d B_t^i,\frac{\tilde{Z}_t^{i,N}}{|\tilde{Z}_t^{i,N}|}1_{\{|\tilde{Z}_t^{i,N}|\neq 0\}}\right\>\\
&+ 2\sqrt{\beta}\pi_R^\varepsilon(|\tilde{Z}_t^{i,N}|)\left\<\frac{\tilde{Z}_t^{i,N}}{|\tilde{Z}_t^{i,N}|}1_{\{|\tilde{Z}_t^{i,N}|\neq 0\}},\d W_t^i\right\>\\
&\leq \gamma(|\tilde{Z}_t^{i,N}|)\d t+K_b|\tilde{Z}_t^{i,N}|\d t+K_\sigma|\tilde{Z}_t^{i,N}|\d t+\frac{1}{N}\sum_{m=1}^N K_b|\tilde{Z}_t^{m,N}|\d t\\
&+\left|\frac{1}{N}\sum_{m=1}^N b^{(1)}(\tilde{X}_t^{i},\tilde{X}_t^{m})-\int_{\R^d}b^{(1)}(\tilde{X}_t^{i},y)\mu_t(\d y)\right|\d t\\
&+\left\<[\sigma(\tilde{X}_t^{i})-\sigma(\tilde{X}_t^{i,N})]\d B_t^i,\frac{\tilde{Z}_t^{i,N}}{|\tilde{Z}_t^{i,N}|}1_{\{|\tilde{Z}_t^{i,N}|\neq 0\}}\right\>\\
&+2\sqrt{\beta}\pi_R^\varepsilon(|\tilde{Z}_t^{i,N}|)\left\<\frac{\tilde{Z}_t^{i,N}}{|\tilde{Z}_t^{i,N}|}1_{\{|\tilde{Z}_t^{i,N}|\neq 0\}},\d W_t^i\right\>,
\end{align*}
where we used
\begin{align*}&\left\<\int_{\R^d}b^{(1)}(\tilde{X}_t^{i},y)\mu_t(\d y)-\frac{1}{N}\sum_{m=1}^N b^{(1)}(\tilde{X}_t^{i,N},\tilde{X}_t^{m,N}), \frac{\tilde{Z}_t^{i,N}}{|\tilde{Z}_t^{i,N}|}1_{\{|\tilde{Z}_t^{i,N}|\neq 0\}}\right\>\\
&\leq K_b|\tilde{Z}_t^{i,N}|+\frac{1}{N}\sum_{m=1}^N K_b|\tilde{Z}_t^{m,N}|+\left|\frac{1}{N}\sum_{m=1}^N b^{(1)}(\tilde{X}_t^{i},\tilde{X}_t^{m})-\int_{\R^d}b^{(1)}(\tilde{X}_t^{i},y)\mu_t(\d y)\right|.
\end{align*}
Let $$\tilde{\gamma}(v)=\gamma(v)+K_{\sigma}v,\ \ v\geq 0,$$ and define
$$f(r)=\int_0^r\e^{-\frac{1}{2\beta}\int_0^u\tilde{\gamma}(v)\d v}\int_u^\infty s\e^{\frac{1}{2\beta}\int_0^s\tilde{\gamma}(v)\d v}\d s\d u,\ \ r\geq 0.$$
Then one can see that
\begin{align}
\label{MY1}f'(r)=\e^{-\frac{1}{2\beta}\int_0^r\tilde{\gamma}(v)\d v}\int_r^\infty s\e^{\frac{1}{2\beta}\int_0^s\tilde{\gamma}(v)\d v}\d s>0,
\end{align}
and
\begin{align}\label{sed}f''(r)=-\frac{1}{2\beta}\tilde{\gamma}(r)f'(r)-r.
\end{align}
Moreover, noting that
$\gamma(v)\geq -K_2 v$, we derive
$$\int_0^\infty s\e^{\frac{1}{2\beta}\int_0^s\{\gamma(v)+K_\sigma v\}\d v}\d s\geq \int_0^\infty s\e^{\frac{-(K_2-K_\sigma)s^2}{4\beta}}\d s=\frac{2\beta}{K_2-K_\sigma}.$$
Combining this with \eqref{kb-kd}, we have
\begin{align}\label{CBK}K_b<\frac{2\beta^2}{(K_2-K_\sigma)\left(\int_0^\infty s\e^{\frac{1}{2\beta}\int_0^s\{\gamma(v)+K_\sigma v\}\d v}\d s\right)^2}\leq \frac{K_2-K_\sigma}{2}.
\end{align}
Recall that
$$\gamma(r)=\left\{
  \begin{array}{ll}
K_1r, & \hbox{$r\leq R$;} \\
    \{-\frac{K_1+K_2}{R}(r-R)+K_1\}r, & \hbox{$R\leq r\leq 2R$;} \\
    -K_2r, & \hbox{$r>2R$}
  \end{array}
\right.
$$
for $K_2>K_b+K_\sigma$ due to \eqref{CBK}. Letting $\ell_0=\{1+\frac{K_1+K_\sigma}{K_1+K_2}\}R$, it is not difficult to see that
$$\left\{
  \begin{array}{ll}
    \tilde{\gamma}(r)\geq 0, & \hbox{$r\in [0,\ell_0]$;} \\
    \tilde{\gamma}(r)< 0, & \hbox{$r\in(\ell_0,\infty)$.}
  \end{array}
\right.
$$
By \eqref{sed} and \eqref{MY1}, we derive
\begin{align}\label{fii}f''(r)\leq 0, \ \ r\in[0,\ell_0].
\end{align}
In view of the definition of $\gamma$ and $\tilde{\gamma}$, we conclude that
$
\frac{r}{-\tilde{\gamma}(r)}
$
is decreasing in $(\ell_0,\infty)$.
This combined with the integration by parts formula gives
\begin{align*}
\int_r^\infty s\e^{\frac{1}{2\beta}\int_0^s\tilde{\gamma}(v)\d v}\d s
&=\int_r^\infty \frac{2\beta s}{\tilde{\gamma}(s)}\left(\frac{\d}{\d s} \e^{\frac{1}{2\beta}\int_0^s\tilde{\gamma}(v)\d v}\right)\d s\\
&\leq -\frac{2\beta r}{\tilde{\gamma}(r)}\e^{\frac{1}{2\beta}\int_0^r\tilde{\gamma}(v)\d v},\ \ r> \ell_0,
\end{align*}
which together with \eqref{MY1} and \eqref{sed} yields
$$f''(r)\leq 0,\ \ r\in(\ell_0,\infty).$$
This as well as \eqref{fii} means that $f''\leq 0$ so that $f(r)\leq f'(0)r$ and $\frac{f(r)}{r}$ is decreasing on $(0,\infty)$. As a result, we derive from \eqref{MY1} that
\begin{align*}
\inf_{r>0} \frac{f(r)}{r}&=\lim_{r\to\infty}\frac{f(r)}{r}=\lim_{r\to \infty}f'(r)=\lim_{r\to \infty}\frac{\int_r^\infty s\e^{\frac{1}{2\beta}\int_0^s\tilde{\gamma}(v)\d v}\d s}{\e^{\frac{1}{2\beta}\int_0^r\tilde{\gamma}(v)\d v}}=\frac{2\beta}{K_2-K_\sigma}.
\end{align*}
So, we conclude that
\begin{align}\label{cop}\frac{2\beta}{K_2-K_\sigma}r\leq f(r)\leq f'(0)r.
\end{align}
By It\^{o}'s formula and $f''\leq 0$, we have
\begin{align}\label{itf}
\nonumber\d f(|\tilde{Z}_t^{i,N}|)&\leq f'(|\tilde{Z}_t^{i,N}|)\tilde{\gamma}(|\tilde{Z}_t^{i,N}|)\d t\\
\nonumber&+f'(|\tilde{Z}_t^{i,N}|)K_b|\tilde{Z}_t^{i,N}|\d t+f'(|\tilde{Z}_t^{i,N}|)\frac{1}{N}\sum_{m=1}^N K_b|\tilde{Z}_t^{m,N}|\d t\\
&+f'(|\tilde{Z}_t^{i,N}|)\left|\frac{1}{N}\sum_{m=1}^N b^{(1)}(\tilde{X}_t^{i},\tilde{X}_t^{m})-\int_{\R^d}b^{(1)}(\tilde{X}_t^{i},y)\mu_t(\d y)\right|\d t\\
\nonumber&+f'(|\tilde{Z}_t^{i,N}|)\left\<[\sigma(\tilde{X}_t^{i})-\sigma(\tilde{X}_t^{i,N})]\d B_t^i,\frac{\tilde{Z}_t^{i,N}}{|\tilde{Z}_t^{i,N}|}1_{\{|\tilde{Z}_t^{i,N}|\neq 0\}}\right\>\\
\nonumber&+f'(|\tilde{Z}_t^{i,N}|)2\sqrt{\beta}\pi_R^\varepsilon(|\tilde{Z}_t^{i,N}|) \left\<\frac{\tilde{Z}_t^{i,N}}{|\tilde{Z}_t^{i,N}|}1_{\{|\tilde{Z}_t^{i,N}|\neq 0\}},\d W_t^i\right\>\\
\nonumber&+2\beta f''(|\tilde{Z}_t^{i,N}|)\pi_R^\varepsilon(|\tilde{Z}_t^{i,N}|)^2\d t.
\end{align}
It follows from \eqref{sed} and $\|f'\|_\infty=f'(0)$ that
\begin{align*}
&f'(|\tilde{Z}_t^{i,N}|)\tilde{\gamma}(|\tilde{Z}_t^{i,N}|)+2\beta f''(|\tilde{Z}_t^{i,N}|)\pi_R^\varepsilon(|\tilde{Z}_t^{i,N}|)^2\\
&\leq \left(f'(|\tilde{Z}_t^{i,N}|)\tilde{\gamma}(|\tilde{Z}_t^{i,N}|)+2\beta f''(|\tilde{Z}_t^{i,N}|)\right)\pi_R^\varepsilon(|\tilde{Z}_t^{i,N}|)^2\\ &+\|f'\|_\infty\left\{\sup_{s\in[0,\varepsilon]}\gamma^{+}(s)+K_\sigma\varepsilon\right\}\\
&\leq -2\beta |\tilde{Z}_t^{i,N}|+2\beta |\tilde{Z}_t^{i,N}|\left(1-\pi_R^\varepsilon(|\tilde{Z}_t^{i,N}|)^2\right) +\|f'\|_\infty\left\{\sup_{s\in[0,\varepsilon]}\gamma^{+}(s)+K_\sigma\varepsilon\right\}\\
&\leq -2\beta |\tilde{Z}_t^{i,N}|+2\beta \varepsilon +f'(0)\left\{\sup_{s\in[0,\varepsilon]}\gamma^{+}(s)+K_\sigma\varepsilon\right\}.
\end{align*}
This combined with \eqref{cop} and \eqref{itf} gives
\begin{align*}
\d \sum_{i=1}^Nf(|\tilde{Z}_t^{i,N}|)
&\leq -\left\{\frac{2\beta}{f'(0)} -f'(0)\frac{(K_2-K_\sigma)}{\beta}K_b\right\}\sum_{i=1}^Nf(|\tilde{Z}_t^{i,N}|)\d t\\
&+2\sum_{i=1}^N\beta \varepsilon \d t +f'(0)\sum_{i=1}^N\left\{\sup_{s\in[0,\varepsilon]}\gamma^{+}(s)+K_\sigma\varepsilon\right\}\d t\\
&+\sum_{i=1}^Nf'(|\tilde{Z}_t^{i,N}|)\left|\frac{1}{N}\sum_{m=1}^N b^{(1)}(\tilde{X}_t^{i},\tilde{X}_t^{m})-\int_{\R^d}b^{(1)}(\tilde{X}_t^{i},y)\mu_t(\d y)\right|\d t\\
&+\sum_{i=1}^Nf'(|\tilde{Z}_t^{i,N}|)\left\<[\sigma(\tilde{X}_t^{i})-\sigma(\tilde{X}_t^{i,N})]\d B_t^i,\frac{\tilde{Z}_t^{i,N}}{|\tilde{Z}_t^{i,N}|}1_{\{|\tilde{Z}_t^{i,N}|\neq 0\}}\right\>\\
&+\sum_{i=1}^Nf'(|\tilde{Z}_t^{i,N}|)2\sqrt{\beta}\pi_R^\varepsilon(|\tilde{Z}_t^{i,N}|) \left\<\frac{\tilde{Z}_t^{i,N}}{|\tilde{Z}_t^{i,N}|}1_{\{|\tilde{Z}_t^{i,N}|\neq 0\}},\d W_t^i\right\>.
\end{align*}
Let $\lambda=\frac{2\beta}{f'(0)} -f'(0)\frac{(K_2-K_\sigma)}{\beta}K_b$. Then \eqref{kb-kd} and the fact that $f'(0)=\int_0^\infty s\e^{\frac{1}{2\beta}\int_0^s\tilde{\gamma}(v)\d v}\d s$ imply $\lambda>0$. Hence, it follows that
\begin{align}\label{fzn}
\nonumber&\sum_{i=1}^N\E f(|\tilde{Z}_t^{i,N}|)\\
\nonumber&\leq \exp\left\{-\lambda t\right\}\sum_{i=1}^N\E f(|\tilde{Z}_0^{i,N}|)\\
&+N\int_0^t\exp\left\{-\lambda(t-s)\right\}\left\{2\beta \varepsilon  +f'(0)\left\{\sup_{s\in[0,\varepsilon]}\gamma^{+}(s)+K_\sigma\varepsilon\right\}\right\}\d s\\
\nonumber&+\int_0^t\exp\left\{-\lambda(t-s)\right\}f'(0)N\\
\nonumber&\qquad\qquad\quad\times\E\left|\frac{1}{N}\sum_{m=1}^N b^{(1)}(\tilde{X}_s^{i},\tilde{X}_s^{m})-\int_{\R^d}b^{(1)}(\tilde{X}_s^{i},y)\mu_s(\d y)\right|\d s.
\end{align}
Moreover, It\^{o}'s formula implies that
\begin{align*}
\d (1+|\tilde{X}_t^{i}|^2)&=2\<\tilde{X}_t^{i},b^{(0)}(\tilde{X}_t^{i})\>\d t+2\left\<\tilde{X}_t^{i},\int_{\R^d}b^{(1)}(\tilde{X}_t^{i},y)\mu_t(\d y)\right\>\d t\\
&+\beta d\d t+\|\sigma(\tilde{X}_t^{i})\|_{HS}^2\d t+\d \tilde{M}_t,~t\ge0
\end{align*}
for some martingale $\tilde{M}_t$.
By {\bf(A)}, we can find a constant $C_0>0$ such that
\begin{align*}&2\<x,b^{(0)}(x)\>+2\left\<x,\int_{\R^d}b^{(1)}(x,y)\mu_t(\d y)\right\>+\beta d+\|\sigma(x)\|_{HS}^2\\
&\leq (2K_1+2K_2)|x|^21_{\{|x|\leq 2R\}}-(2K_2-2K_\sigma)|x|^2+2\<x,b^{(0)}(0)\>+\beta d\\
&+2\sqrt{2K_\sigma}\|\sigma(0)\|_{HS}|x|+\|\sigma(0)\|_{HS}^2+ 2|x|K_b(|x|+\mu_t(|\cdot|))+2|x||b^{(1)}(0,0)|\\
&\leq (2K_1+2K_2)4R^2+\beta d+\|\sigma(0)\|_{HS}^2-(2K_2-2K_\sigma-4K_b)|x|^2\\
&+2|x|(|b^{(0)}(0)|+\sqrt{2K_\sigma}\|\sigma(0)\|_{HS}+|b^{(1)}(0,0)|)-2K_b|x|^2+ 2(1+|x|^2)^{\frac{1}{2}}K_b\mu_t(|\cdot|)\\
&\leq C_0-(K_2-K_\sigma-2K_b)(1+|x|^2)-2K_b(1+|x|^2)+2(1+|x|^2)^{\frac{1}{2}}K_b\mu_t(|\cdot|)\\
&= C_0-(K_2-K_\sigma-2K_b)(1+|x|^2)\\
&+(1+|x|^2)^{\frac{1-\delta}{2}}\{-2K_b(1+|x|^2)^{\frac{1+\delta}{2}}+2(1+|x|^2)^{\frac{\delta}{2}}K_b\mu_t(|\cdot|)\}\\
&\leq C_0-(K_2-K_\sigma-2K_b)(1+|x|^2)\\
&+(1+|x|^2)^{\frac{1-\delta}{2}}2K_b\frac{1}{1+\delta}\left\{-(1+|x|^2)^{\frac{1+\delta}{2}}+ \mu_t((1+|\cdot|^2)^{\frac{1+\delta}{2}})\right\},\ \ x\in\R^d.
\end{align*}
This together with the It\^{o} formula gives
\begin{align*}
\d (1+|\tilde{X}_t^{i}|^2)^{\frac{1+\delta}{2}}&\leq C_1\d t- \frac{1+\delta}{2}(K_2-K_\sigma-2K_b)(1+|\tilde{X}_t^{i}|^2)^{\frac{1+\delta}{2}}\d t\\
&+K_b\left\{-(1+|\tilde{X}_t^{i}|^2)^{\frac{1+\delta}{2}}+ \mu_t((1+|\cdot|^2)^{\frac{1+\delta}{2}})\right\}+\d \bar{M}_t,~t\ge0.
\end{align*}
Combining this with \eqref{CBK}, we conclude that there exists a constant $c_0>0$ such that
$$\E|\tilde{X}_t^i|^{1+\delta}\leq c_0(1+\E|\tilde{X}_0^i|^{1+\delta}),\ \ t\geq 0.$$
So, we derive from Lemma \ref{CTY} below that
\begin{align}\label{ubi}\E\left|\frac{1}{N}\sum_{m=1}^N b^{(1)}(\tilde{X}_s^{i},\tilde{X}_s^{m})-\int_{\R^d}b^{(1)}(\tilde{X}_s^{i},y)\mu_s(\d y)\right|\d s\leq \tilde{c}_0\{1+\{\mu_0(|\cdot|^{1+\delta})\}^{\frac{1}{1+\delta}}\}N^{-\frac{\delta}{1+\delta}}
\end{align}
for some constant $\tilde{c}_0>0$. Note that different from \eqref{LAR}, $\tilde{c}_0$ in \eqref{ubi} is independent of $s$.
Substituting \eqref{ubi} into \eqref{fzn} and applying \eqref{cop}, we can find some constants $c_1,c_2>0$ such that
\begin{align*}
 \sum_{i=1}^N\E|\tilde{Z}_t^{i,N}|&\leq  c_1\e^{-c_2t}\sum_{i=1}^N\E|\tilde{Z}_0^{i,N}|+c_1N\{1+\{\mu_0(|\cdot|^{1+\delta})\}^{\frac{1}{1+\delta}}\}N^{-\frac{\delta}{1+\delta}}\\
&+c_1\left\{2\beta \varepsilon  +f'(0)\left\{\sup_{s\in[0,\varepsilon]}\gamma^{+}(s)+K_\sigma\varepsilon\right\}\right\}.
\end{align*}
Letting $\varepsilon\to 0$, we derive
\begin{align*}
\widetilde{\W}_1((P_t^{[N],N})^\ast\mu^N_0,(P_t^\ast\mu_0)^{\otimes N})&\leq  \sum_{i=1}^N\E|\tilde{Z}_t^{i,N}|\\
&\leq  c_1\e^{-c_2t}\sum_{i=1}^N\E|\tilde{Z}_0^{i,N}|+c_1N\{1+\{\mu_0(|\cdot|^{1+\delta})\}^{\frac{1}{1+\delta}}\}N^{-\frac{\delta}{1+\delta}}.
\end{align*}
Taking infimum with respect to $(\tilde{X}_0^{i,N},\tilde{X}_0^i)_{1\leq i\leq N}$ with $\L_{(\tilde{X}_0^{i,N})_{1\leq i\leq N}}=\mu_0^N, \L_{(\tilde{X}_0^{i})_{1\leq i\leq N}}=\mu_0^{\otimes N}$, we get
\begin{align*}
\widetilde{\W}_1((P_t^{[N],N})^\ast\mu^N_0,(P_t^\ast\mu_0)^{\otimes N})\leq  c_1\e^{-c_2t}\widetilde{\W}_1(\mu^N_0,\mu_0^{\otimes N})+c_1N\{1+\{\mu_0(|\cdot|^{1+\delta})\}^{\frac{1}{1+\delta}}\}N^{-\frac{\delta}{1+\delta}}.
\end{align*}
Therefore, condition (iv) in Theorem \ref{POCin} holds. Finally, applying Theorem \ref{POCin}, we complete the proof.
\end{proof}

\section{Application in $\alpha$-stable noise case}
Recall that $d$-dimensional rotationally invariant $\alpha$-stable process has L\'{e}vy measure
$$\nu^\alpha(\d z)=\frac{c_{d,\alpha}}{|z|^{d+\alpha}}\d z $$ for some constant $c_{d,\alpha}>0$ and the generator $-(-\Delta)^{\frac{\alpha}{2}}$ is defined by
$$-(-\Delta)^{\frac{\alpha}{2}}f(x)=\int_{\R^d-\{0\}}\{f(x+z)-f(x)-\<\nabla f(x),z\>1_{\{|z|\leq 1\}}\}\nu^{\alpha}(\d z),\ \ f\in C_b^2(\R^d), \|f\|_\infty\leq 1. $$
Let $b^{(0)}$, $b^{(1)}$ and $\{Z_t^i\}_{i\geq 1}$ be introduced in Section 2 with $n=d$ and $\sigma=I_{d\times d}$. The \eqref{al1} and \eqref{al} reduce to
\begin{align*}
\d X^{i,N}_t=b^{(0)}(X_t^{i,N})\d t+\frac{1}{N}\sum_{m=1}^Nb^{(1)}(X_t^{i,N},X_t^{m,N})\d t+\d Z_t^i,\ \ 1\leq i\leq N,
\end{align*}
and
\begin{equation*}
 \d X_t^i=b^{(0)}(X_t^i)\d t+\int_{\R^d}b^{(1)}(X_t^i,y)\L_{X_t^i}(\d y)\d t+\d Z_t^i,\ \ 1\leq i\leq N
\end{equation*}
respectively.
We make the following assumptions.
\begin{enumerate}
\item[{\bf(B1)}] The generator of $Z_t^i$ is $-(-\Delta)^{\frac{\alpha}{2}}$ for some $\alpha\in(1,2)$.
\item[{\bf(B2)}]  $b^{(0)}$ is continuous. There exist $\ell_0>0$, $K_1\geq 0, K_2>0$, $K_b\geq0$ such that
\begin{align}\label{pdic}
\nonumber&\langle x_1-x_2, b^{(0)}(x_1)-b^{(0)}(x_2)\rangle\\
&\leq K_1| x_1-x_2|^21_{\{|x_1-x_2|\leq\ell_0\}}-K_2| x_1-x_2|^21_{\{|x_1-x_2|>\ell_0\}},
\end{align}
and
    $$|b^{(1)}(x,y)-b^{(1)}(\tilde{x},\tilde{y})|\leq K_b(|x-\tilde{x}|+|y-\tilde{y}|),\ \ x,\tilde{x},y,\tilde{y}\in\R^d.$$
\end{enumerate}
Recall that for two measures $\zeta,\tilde{\zeta}$ on $\R^d$, $$\zeta\wedge\tilde{\zeta}=\zeta-(\zeta-\tilde{\zeta})^{+}.$$
Let
\begin{align}\label{Jst}
J^\alpha(s)=\inf_{x\in\R^d,|x|\leq s}(\nu^\alpha\wedge(\delta_x\ast\nu^\alpha))(\R^d),\ \ s\geq0,
\end{align}
here $$\delta_x\ast\nu^\alpha(\d z)=\nu^\alpha(\d z-x)=\frac{c_{d,\alpha}}{|z-x|^{d+\alpha}}\d z,\ \ x\in\R^d,$$
 and hence
 $$(\nu^\alpha\wedge(\delta_x\ast\nu^\alpha))(\d z)=\frac{c_{d,\alpha}}{(|z|\vee|z-x|)^{d+\alpha}}\d z,\ \ x\in\R^d.$$
By \cite[Example 1.2]{LW}, there exist constants $\kappa>0$ and $\tilde{c}_{d,\alpha}>0$ such that
\begin{align}\label{MJT}J^\alpha(s)\geq \tilde{c}_{d,\alpha}s^{-\alpha},\ \ s\in(0,\kappa].
\end{align}
In fact, for any $r>0$ and $x\in\R^d$ with $|x|=r$, it holds
\begin{align*}(\nu^\alpha\wedge(\delta_x\ast\nu^\alpha))(\R^d)&=\int_{\R^d} \frac{c_{d,\alpha}}{(|z|\vee|z-x|)^{d+\alpha}}\d z\\
&\geq \int_{|z|\leq \frac{r}{2}} \frac{c_{d,\alpha}}{(|z|\vee|z-x|)^{d+\alpha}}\d z\\
&\geq \frac{2^{d+\alpha}}{3^{d+\alpha}}c_{d,\alpha}r^{-d-\alpha}\int_{|z|\leq \frac{r}{2}} \d z\\
&=\tilde{c}_{d,\alpha}r^{-\alpha}.
\end{align*}
So, we have
\begin{align*}
J^\alpha(s)=\inf_{0\leq r\leq s}\inf_{x\in\R^d,|x|=r}(\nu^\alpha\wedge(\delta_x\ast\nu^\alpha))(\R^d)\geq \inf_{0\leq r\leq s}\tilde{c}_{d,\alpha}r^{-\alpha}=\tilde{c}_{d,\alpha}s^{-\alpha},\ \ s>0.
\end{align*}
Moreover, for any $\eta\in(0,1)$, take \begin{align}\label{cmy}\sigma_\eta(r)=\frac{\tilde{c}_{d,\alpha}(\kappa\wedge(2\ell_0))^{2-\alpha}}{2(2\ell_0)^{1+\eta}}r^\eta,\ \ r\in[0,2\ell_0],
\end{align}
here \begin{align}\label{kkt}\frac{\tilde{c}_{d,\alpha}(\kappa\wedge(2\ell_0))^{2-\alpha}}{2(2\ell_0)^{1+\eta}}= \inf_{r\in[0,2\ell_0]}\frac{\tilde{c}_{d,\alpha}(\kappa\wedge r)^{2-\alpha}}{2r^{1+\eta}}.
\end{align}
Then $\sigma_\eta\in C([0,2\ell_0])\cap C^2((0,2\ell_0])$ and it is a nondecreasing and concave function. Moreover, \eqref{MJT}, \eqref{cmy} and \eqref{kkt} imply that
\begin{align}\label{tij}\sigma_\eta(r)\leq \frac{1}{2r}J^\alpha(\kappa\wedge r)(\kappa\wedge r)^2, \ \ r\in[0,2\ell_0].
\end{align}
Let
\begin{align}\label{gfu} \nonumber&g_\eta(r)=\left(1+\frac{K_1}{K_2}\right)\int_0^r\frac{1}{\sigma_\eta(s)}\d s, \ \ r\in[0,2\ell_0],\\
& c_1=\e^{-2K_2g_\eta(2\ell_0)}.
 \end{align}
\begin{thm}\label{POC101}
Assume {\bf(B1)}-{\bf(B2)}. Let $\mu_0\in \scr P_{1+\delta}(\R^d)$ for some $\delta\in(0,\alpha-1)$ and $\mu_0^N\in\scr P_1((\R^d)^N)$ be exchangeable. If $$K_b< \frac{2c_1^2K_2}{(1+c_1)^2},$$
then there exist positive constants $c,\lambda$ such that
\begin{align}\label{CMYb}\nonumber&\|(P_t^{[k],N})^\ast\mu^N_0-(P_t^\ast\mu_0)^{\otimes k}\|_{var}\\
&\leq kc\e^{-\lambda t}\frac{\widetilde{\W}_1(\mu_0^N,\mu_0^{\otimes N})}{N}+c\{1+\{\mu_0(|\cdot|^{1+\delta})\}^{\frac{1}{1+\delta}}\}kN^{-\frac{\delta}{1+\delta}},\ \ 1\leq k\leq N,t\geq 1.
\end{align}
\end{thm}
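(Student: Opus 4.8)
The plan is to verify the four hypotheses (i)--(iv) of Theorem \ref{POCin} with $p=1+\delta$, $\varphi(s)=cs^{-1/\alpha}$, $\varepsilon(t)=c\e^{-\lambda t}$ and $\Delta(N)=\tilde\Delta(N)=N^{-\delta/(1+\delta)}$, and then quote Theorem \ref{POCin} verbatim. The requirement $\int_0^T\varphi(s)\,\d s<\infty$ in (ii) holds precisely because $\alpha>1$, so \textbf{(B1)} is essential. As in the proof of Theorem \ref{POC10}, I would first note that $\|\cdot\|_{var}$ is lower semicontinuous along weakly convergent sequences (via \eqref{tvc}) and, using a mollification as in Lemma \ref{yap}, reduce to the case where $b^{(0)}$ is globally Lipschitz, all constants being independent of the Lipschitz bound.

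For hypothesis (i), under \textbf{(B2)} and $\alpha\in(1,2)$ the decoupled equation \eqref{deSDE} has the backward Kolmogorov equation $\partial_sP_{s,t}^\mu f=-\scr L_s^\mu P_{s,t}^\mu f$ with
$$\scr L_s^\mu=\langle b^{(0)},\nabla\rangle+\Big\langle\int_{\R^d}b^{(1)}(\cdot,y)\mu_s(\d y),\nabla\Big\rangle-(-\Delta)^{\alpha/2};$$
tensorising exactly as in the derivation of \eqref{KOL} gives $\partial_s(P_{s,t}^\mu)^{\otimes k}F=-(\scr L_s^\mu)^{\otimes k}(P_{s,t}^\mu)^{\otimes k}F$. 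Applying It\^o's formula for jump processes to $s\mapsto[(P_{s,t}^\mu)^{\otimes k}F](X_s^{1,N},\dots,X_s^{k,N})$ along \eqref{al1}, the generator term cancels the non-interacting drift together with the entire jump contribution (the driving Poisson measures are common), and only the interaction mismatch $\sum_{i=1}^k\langle B_s^i,\nabla_i(P_{s,t}^\mu)^{\otimes k}F\rangle$ survives; taking expectations produces \eqref{DUH0}. This is the $\alpha$-stable analogue of the Duhamel identity \eqref{Dum}, and it is here that one must work with $\Phi(x)=x$: there is no usable chain rule for $(-\Delta)^{\alpha/2}$, so the relative-entropy route is unavailable. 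Hypothesis (ii), the gradient estimate \eqref{gra0t} with $\varphi(s)=cs^{-1/\alpha}$, follows from the now-standard smoothing estimates for the semigroup of an $\alpha$-stable process perturbed by a drift that is locally Lipschitz and bounded on bounded sets (the $\alpha$-stable counterpart of \cite[Theorem 4.1]{W18}), where $\sigma=I_{d\times d}$ and $1/\alpha<1$ enter.

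For hypothesis (iii) I would couple \eqref{al1} and \eqref{al} through the same noise $Z_t^i$; since $\sigma=I_{d\times d}$ the difference $Z_t^{i,N}=X_t^i-X_t^{i,N}$ has no jump part, so It\^o--Tanaka, the one-sided bound embedded in \eqref{pdic} (which yields $\langle x_1-x_2,b^{(0)}(x_1)-b^{(0)}(x_2)\rangle\le K_1|x_1-x_2|^2$ for all $x_1,x_2$) and the Lipschitz bound on $b^{(1)}$ give a closed Gronwall inequality for $\sum_i\E|Z_t^{i,N}|$, the genuinely mean-field fluctuation $|\tfrac1N\sum_m b^{(1)}(X_t^i,X_t^m)-\int_{\R^d}b^{(1)}(X_t^i,y)\mu_t(\d y)|$ being controlled by Lemma \ref{CTY} after checking that $\E|X_t^i|^{1+\delta}$ is finite (here $\delta<\alpha-1$ is used so the $\alpha$-stable noise has a finite $(1+\delta)$-moment) and grows at most exponentially; exchangeability and optimising over the initial coupling then give \eqref{CON}.

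Hypothesis (iv) is the crux, and I would establish it by the asymptotic refined reflection/basic coupling for jump SDEs of \cite{LMW}: when $|\tilde Z_t^{i,N}|\le 2\ell_0$ the $i$-th pair is driven to jump onto the same point using the overlap of $\nu^\alpha$ with its reflection, which gives a coagulation rate bounded below by $J^\alpha(\kappa\wedge r)$ as quantified in \eqref{Jst}--\eqref{tij}; when $|\tilde Z_t^{i,N}|>2\ell_0$ one uses synchronous coupling and the dissipativity $-K_2|\cdot|^2$ of \eqref{pdic}. With the concave scale function $f$ built from $g_\eta$ in \eqref{gfu} (so that $f$ is comparable to the identity with ratio at least $c_1$ on $[0,2\ell_0]$), It\^o's formula for the coupling yields $\d\sum_i f(|\tilde Z_t^{i,N}|)\le-\lambda_0\sum_i f(|\tilde Z_t^{i,N}|)\,\d t+(\text{fluctuation})\,\d t+\d M_t$ with $\lambda_0>0$ exactly under $K_b<2c_1^2K_2/(1+c_1)^2$, the fluctuation term again handled by a uniform-in-time $(1+\delta)$-moment bound for $\tilde X_t^i$ (which also gives \eqref{unf}) together with Lemma \ref{CTY}; Gronwall, sending the regularisation parameter to $0$ and optimising over the initial coupling deliver \eqref{CMtty} with $\varepsilon(t)=c\e^{-\lambda_0 t}$ and $\tilde\Delta(N)=N^{-\delta/(1+\delta)}$. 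Conclusion \eqref{CMYb} then follows from Theorem \ref{POCin}. I expect hypothesis (iv) to be the main obstacle: extracting from the jump part of It\^o's formula a contraction strong enough near the diagonal in the absence of any Brownian component---which is exactly what the singularity $J^\alpha(s)\gtrsim s^{-\alpha}$ provides---while keeping the sharp threshold $2c_1^2K_2/(1+c_1)^2$ on $K_b$; hypothesis (i) is the secondary subtlety, since there the exact cancellation of the jump terms must substitute for the chain rule that is unavailable for the fractional Laplacian.
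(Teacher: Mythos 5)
Your proposal matches the paper's proof step by step: hypotheses (i)--(iv) of Theorem \ref{POCin} are verified with $\varphi(s)=cs^{-1/\alpha}$, the Duhamel identity comes from the backward Kolmogorov equation plus tensoring (with exact cancellation of the jump terms replacing the unavailable chain rule for $(-\Delta)^{\alpha/2}$), synchronous coupling and Lemma \ref{CTY} give (iii), and the asymptotic refined basic coupling of \cite{LMW} with the concave scale function built from $g_\eta$ gives (iv), the threshold $K_b<2c_1^2K_2/(1+c_1)^2$ entering exactly as you describe. One small slip: the regularisation of $b^{(0)}$ should invoke the Yosida approximation as in \cite[part (c) of the proof of Theorem 2.1]{WW} (the paper's own citation for the pure-jump case) rather than Lemma \ref{yap}, which is stated and proved only for the multiplicative Brownian setting, though the intent --- reduce to Lipschitz $b^{(0)}$ with constants independent of the Lipschitz bound, then pass to the limit using \eqref{upl} --- is the right one.
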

\begin{rem} The condition $\E|Z_t^i|^2<\infty$ in \cite[Theorem 1.2]{LMW} is removed in Theorem \ref{POC101}, which is attributed to Lemma \ref{CTY} in Appendix.
\end{rem}
\begin{proof}[Proof of Theorem \ref{POC101}]
Similar to the proof of Theorem \ref{POC10}, by the Yosida approximation in \cite[part (c) of proof of Theorem 2.1]{WW} and \eqref{upl}, it is sufficient to prove \eqref{CMYb} for  Lipschitz continuous $b^{(0)}$. So, in the following, we assume that $b^{(0)}$ is Lipschitz continuous and we will verify conditions (i)-(iv) in Theorem \ref{POCin} one by one. We should remark that the proof of conditions (i)-(iii) is similar to that of Theorem \ref{POC10}.

(1) Take $(X_0^{i,N})_{1\leq i\leq N}$ and $(X_0^{i})_{1\leq i\leq N}$ such that $\L_{(X_0^{i,N})_{1\leq i\leq N}}=\mu_0^N$ and $\L_{(X_0^{i})_{1\leq i\leq N}}=\mu_0^{\otimes N}$.
Fix $t>0$. Recall that $P_{s,t}^\mu$ and $(P_{s,t}^\mu)^{\otimes k}$ are defined in \eqref{Pmt} and \eqref{Tenso} respectively. Since $b^{(0)}$ and $b^{(1)}$ are Lipschitz continuous, the backward Kolmogorov equation
\begin{align}\label{BKEa}\frac{\d P_{s,t}^\mu f}{\d s}=-\scr L_s^\mu P^\mu_{s,t}f,\ \ f\in C_b^2(\R^d), \|f\|_\infty\leq 1
\end{align}
holds, here
$$\scr L_s^\mu=\left\<b^{(0)}+\int_{\R^d}b^{(1)}(\cdot,y)\mu_s(\d y),\nabla\right\>-(-\Delta)^{\frac{\alpha}{2}}.$$
For any $F\in C_b^2((\R^d)^k)$ with $\|F\|_\infty\leq 1$, $1\leq i\leq k$, $x=(x^1,\cdots,x^k)\in(\R^d)^k$, denote $$-(-\Delta_i)^{\frac{\alpha}{2}} F(x)=\int_{\R^d}\{F(x^1,\cdots,x^i+z,\cdots,x^k)-F(x)-\<\nabla_i F(x),z\>1_{\{|z|\leq 1\}}\}\nu^\alpha(\d z), $$
and define
\begin{align*}(\scr L_s^\mu)^{\otimes k}F(x)
&=\sum_{i=1}^k\bigg\{\left\<b^{(0)}(x^i)+\int_{\R^d}b^{(1)}(x^i,y)\mu_s(\d y),\nabla_i F(x)\right\>-(-\Delta_i)^{\frac{\alpha}{2}} F(x)\bigg\}.
\end{align*}
Repeating the argument to derive \eqref{KOL} from \eqref{BKE}, it follows from \eqref{BKEa} that
\begin{align*}\frac{\d (P_{s,t}^\mu)^{\otimes k} F}{\d s}=-(\scr L_s^\mu)^{\otimes k} (P^\mu_{s,t})^{\otimes k}F,\ \ F\in C_b^2((\R^d)^k), \|F\|_\infty\leq 1, s\in[0,t].
\end{align*}
This together with the procedure to derive \eqref{DUH} from \eqref{KOL} implies (i) in Theorem \ref{POCin}.

(2) By \cite[Corollary 2.2(2)]{WW}, there exists a constant $c_0>0$ independent of the Lipschitz constant of $b^{(0)}$ such that
\begin{align*}|\nabla P^\mu_{r,t} f|\leq c_0\frac{1}{(t-r)^{1/\alpha}\wedge 1}\|f\|_\infty,\ \ 0\leq r<t, f\in\scr B_b(\R^d).
\end{align*}
This means that (ii) in Theorem \ref{POCin} holds.

(3) Note that \eqref{pdic} implies that
\begin{align}\label{pdib}
&\langle x_1-x_2, b^{(0)}(x_1)-b^{(0)}(x_2)\rangle\leq K_1|x_1-x_2|^2.
\end{align}
It is standard to derive from \eqref{pdib} and {\bf(B1)-(B2)} that
\begin{align}\label{mot1a}
\E((1+|X_t^1|^2)^{\frac{1+\delta}{2}})\leq c_0(t)\mu_0(1+|\cdot|^{1+\delta}),\ \ t\geq 0
\end{align}
for some increasing function $c_0:[0,\infty)\to[0,\infty)$.
Let $Z_t^{i,N}=X_t^{i}-X_t^{i,N}$. It follows from It\^{o}'s formula that
\begin{align*}
\d |Z_t^{i,N}|&\leq \left\<b^{0}(X_t^{i})-b^{0}(X_t^{i,N}),\frac{Z_t^{i,N}}{|Z_t^{i,N}|}1_{\{|Z_t^{i,N}|\neq 0\}}\right\>\d t+K_b|Z_t^{i,N}|\d t+\frac{1}{N}\sum_{m=1}^N K_b|Z_t^{m,N}|\d t\\
&+\left|\frac{1}{N}\sum_{m=1}^N b^{(1)}(X_t^{i},X_t^{m})-\int_{\R^d}b^{(1)}(X_t^{i},y)\mu_t(\d y)\right|\d t\\
&\leq K_1|Z_t^{i,N}|\d t+K_b|Z_t^{i,N}|\d t+\frac{1}{N}\sum_{m=1}^N K_b|Z_t^{m,N}|\d t\\
&+\left|\frac{1}{N}\sum_{m=1}^N b^{(1)}(X_t^{i},X_t^{m})-\int_{\R^d}b^{(1)}(X_t^{i},y)\mu_t(\d y)\right|\d t.
\end{align*}
Applying Gronwall's inequality, Lemma \ref{CTY} below and \eqref{mot1a}, we get
\begin{align*}
\sum_{i=1}^N\E|Z_s^{i,N}|&\leq \e^{(K_1+2K_b)s}\sum_{i=1}^N\E|Z_0^{i,N}|+ c(s)\{1+\{\mu_0(|\cdot|^{1+\delta})\}^{\frac{1}{1+\delta}}\}NN^{-\frac{\delta}{1+\delta}}
\end{align*}
for some increasing function $c:[0,\infty)\to[0,\infty)$.
By the same argument to derive \eqref{reu} from \eqref{myt}, (iii) in Theorem \ref{POCin} holds.

(4) Finally, we will adopt asymptotic refined basic coupling and modify \cite[Proof of Theorem 1.2]{LMW} to derive \eqref{CMtty}.
Let $\kappa$ be defined in \eqref{MJT} and let
\begin{align}\label{xka}(x)_\kappa= \left\{\frac{|x|\wedge\kappa}{|x|}1_{\{|x|\neq 0\}}\right\}x,\ \ x\in\R^d.
\end{align}
For simplicity, we denote
\begin{align}\label{nua}\nu_x^\alpha=\nu^\alpha\wedge(\delta_x\ast\nu^\alpha),\ \ x\in\R^d.
\end{align}
For any $x,y,u\in\R^d$, $F\in C_b^2(\R^{d}\times \R^d)$ with $\|F\|_\infty\leq 1$, define $\scr M^{x,y,u}F:\R^d\to\R$ by
\begin{align*}(\scr M^{x,y,u}F)(z)&=F(x+z,y+z+u)-F(x,y)\\
&-\<\nabla_{x}F(x,y),z\>1_{\{|z|\leq 1\}}-\<\nabla_{y}F(x,y),z+u\>1_{\{|z+u|\leq 1\}},\ \ z\in\R^d.
\end{align*}
Define the refined basic coupling operator:
\begin{align}\label{rbc}
\nonumber(\scr L_R F)(x,y)&=\frac{1}{2}\int_{\R^d}(\scr M^{x,y,(x-y)_\kappa}F)(z)\nu_{(y-x)_\kappa}^\alpha(\d z)\\
&+\frac{1}{2}\int_{\R^d}(\scr M^{x,y,(y-x)_\kappa}F)(z)\nu_{(x-y)_\kappa}^\alpha(\d z)\\
\nonumber&+\int_{\R^d}(\scr M^{x,y,0}F)(z)\left\{\nu^\alpha-\frac{1}{2}\nu_{(x-y)_\kappa}^\alpha-\frac{1}{2}\nu_{(y-x)_\kappa}^\alpha\right\}(\d z)
\end{align}
and the synchronous coupling operator:
\begin{align}\label{sys}
(\scr L_SF)(x,y)=\int_{\R^d}(\scr M^{x,y,0}F)(z)\nu^\alpha(\d z).
\end{align}
We are now in the position to construct asymptotic refined basic coupling operator. Let $\pi_R^\varepsilon$ be defined in \eqref{pir}. For any $\varepsilon>0$, define
\begin{align*}(\scr L^{\varepsilon}F)(x,y)=\pi_R^\varepsilon(|x-y|) (\scr L_R F)(x,y)+(1-\pi_R^\varepsilon(|x-y|))(\scr L_SF)(x,y).
\end{align*}

Next, we adopt the procedure as in \cite[(2.14)]{LMW} to construct coupling processes.
Let $N^i(\d t,\d z)$ be the Poisson random measure associated to $Z_t^i$. Define
$$\hat{N}^i(\d t,\d z,\d u)=N^i(\d t,\d z)1_{[0,1]}(u)\d u.$$
Let $\rho(x,\cdot)=\frac{\d \nu_x^\alpha}{\d \nu^\alpha}$, the Radon-Nikodym derivative of $\nu_x^\alpha$ with respect to $\nu^\alpha$.
Construct
\begin{equation}\label{PPY01}
\d \bar{X}_t^{i}=b^{(0)}(\bar{X}_t^{i})\d t+\int_{\R^d}b^{(1)}(\bar{X}_t^{i},y)\mu_t(\d y)\d t+\d Z_t^i,~t\ge0,
\end{equation}
and
\begin{equation}\begin{split}\label{PPT02}
\d \bar{X}_t^{i,N}&=b^{(0)}(\bar{X}_t^{i,N})\d t+\frac{1}{N}\sum_{m=1}^Nb^{(1)}(\bar{X}_t^{i,N},\bar{X}_t^{m,N})\d t+\d Z_t^i+\d Z_t^{\varepsilon,i},~t\ge0,
\end{split}\end{equation}
where
$$\d Z_t^{\varepsilon,i}=\int_{\R^d\times [0,1]}S^\varepsilon(\bar{Z}_{t-}^{i,N},z,u)\hat{N}^i(\d t,\d z,\d u)$$
with $\bar{Z}_t^{i,N}=\bar{X}_t^{i}-\bar{X}_t^{i,N}$,
\begin{align*}S^\varepsilon(x,z,u)&=(x)_\kappa1_{\{0\leq u\leq \frac{1}{2}\pi_R^\varepsilon (|x|)\rho(-(x)_\kappa,z)\}}\\
&-(x)_\kappa1_{\{\frac{1}{2}\pi_R^\varepsilon (|x|)\rho(-(x)_\kappa,z)\leq u\leq \frac{1}{2}\pi_R^\varepsilon (|x|)\rho(-(x)_\kappa,z)+\frac{1}{2}\pi_R^\varepsilon (|x|)\rho((x)_\kappa,z)\}}
\end{align*}
and $\L_{(\bar{X}_0^{i,N})_{1\leq i\leq N}}=\mu_0^N$,  $\L_{(\bar{X}_0^{i})_{1\leq i\leq N}}=\mu_0^{\otimes N}$.
Let $c_1,g_\eta(r)$ be defined in \eqref{gfu} and define
$$\psi(r)=\left\{
    \begin{array}{ll}
      c_1r+\int_0^r\e^{-2K_2g_\eta(s)}\d s, & \hbox{$r\in[0,2\ell_0]$;} \\
      \psi(2\ell_0)+\psi'(2\ell_0)(r-2\ell_0), & \hbox{$r\in[2\ell_0,\infty)$.}
    \end{array}
  \right.
$$
Since $$g_\eta'(r)>0,\ \ g_\eta''(r)<0,\ \ g_\eta'''(r)>0, \ \ r\in(0,2\ell_0],$$
we conclude that
$$\psi'(r)=\left\{
    \begin{array}{ll}
      \e^{-2K_2g_\eta(2\ell_0)}+\e^{-2K_2g_\eta(r)}>0, & \hbox{$r\in[0,2\ell_0]$;} \\
      \psi'(2\ell_0), & \hbox{$r\in[2\ell_0,\infty)$,}
    \end{array}
  \right.
$$
\begin{align}\label{ps2}\psi''(r)=\left\{
    \begin{array}{ll}
      -2K_2g_\eta'(r)\e^{-2K_2g_\eta(r)}\leq 0, & \hbox{$r\in[0,2\ell_0)$;} \\
      0, & \hbox{$r\in(2\ell_0,\infty)$,}
    \end{array}
  \right.
\end{align}
and
$$\psi'''(r)\geq 0,\ \  \psi^{(4)}(r)\leq 0,\ \ r\in [0,2\ell_0)\cup(2\ell_0,\infty).$$
Then by Taylor's expansion, we have
\begin{align}\label{k1e}
&\psi(r-r\wedge\kappa)+ \psi(r+r\wedge\kappa)-2\psi(r)\leq 0, \ \ r\geq 0,
\end{align}
and
\begin{align}\label{MYa}
&\psi(r-r\wedge\kappa)+ \psi(r+r\wedge\kappa)-2\psi(r)\leq \psi''(r)(r\wedge\kappa)^2,\ \ r\in[0,\ell_0].
\end{align}
Since $\psi'(0)=1+c_1$ and $\psi'(r)$ is decreasing, we conclude that $\psi(r)\leq \psi'(0) r$ and $\frac{\psi(r)}{r}$ is decreasing, which yields
\begin{align}\label{cyt}\inf_{r\geq 0}\frac{\psi(r)}{r}=\lim_{r\to\infty}\frac{\psi(r)}{r}=\lim_{r\to\infty}\psi'(r)=\psi'(2\ell_0)=2 c_1,
\end{align}
and hence we have \begin{align}\label{copa}2c_1r\leq \psi(r)\leq (1+c_1)r.
\end{align}
Define $F_{\psi}(x,y)=\psi(|x-y|), x, y\in\R^d$. It is not difficult to see from \eqref{sys} that
\begin{align}\label{sys12}(\scr L_SF_{\psi})(x,y)=0.
\end{align}
Moreover, by \eqref{rbc} and \eqref{xka}, we get
\begin{align*}&(\scr L_RF_{\psi})(x,y)\\
&=\frac{1}{2}\int_{\R^d}\left\{\psi(|y-x+(x-y)_\kappa|)-\psi(|x-y|)\right\}\nu_{(y-x)_\kappa}^\alpha(\d z)\\
&+\frac{1}{2}\int_{\R^d}\left\{\psi(|y-x+(y-x)_\kappa|)-\psi(|x-y|)\right\}\nu_{(x-y)_\kappa}^\alpha(\d z)\\
&=\frac{1}{2}\nu_{(y-x)_\kappa}^\alpha(\R^d)\left\{\psi(|y-x|-|y-x|\wedge\kappa)+ \psi(|y-x|+|y-x|\wedge\kappa)-2\psi(|x-y|)\right\}.
\end{align*}
This together with \eqref{k1e} and the fact
$J^\alpha(|x-y|\wedge \kappa)\leq \nu_{(y-x)_\kappa}^\alpha(\R^d)$ due to \eqref{Jst} and \eqref{nua}
implies that
\begin{align}\label{LRt}\nonumber&(\scr L_RF_{\psi})(x,y)\\
&\leq \frac{1}{2}J^\alpha(|x-y|\wedge \kappa)\\
\nonumber&\qquad\quad\times \left\{\psi(|y-x|-|y-x|\wedge\kappa)+ \psi(|y-x|+|y-x|\wedge\kappa)-2\psi(|x-y|)\right\}.
\end{align}
In addition, for $r\in[0,\ell_0]$, we derive from \eqref{MYa}, \eqref{ps2}, \eqref{tij},
$g_\eta'(r)=\left(1+\frac{K_1}{K_2}\right)\frac{1}{\sigma_\eta(r)}$ and \eqref{copa} that
\begin{align}\label{cth}\nonumber&\psi'(r)K_1r+\frac{1}{2}J^\alpha(r\wedge \kappa)\left\{\psi(r-r\wedge\kappa)+ \psi(r+r\wedge\kappa)-2\psi(r)\right\}\\
\nonumber&\leq \psi'(r)K_1r+\frac{1}{2}J^\alpha(r\wedge \kappa)\psi''(r)(r\wedge\kappa)^2\\
&\leq \{\e^{-2K_2g_\eta(2\ell_0)}+\e^{-2K_2g_\eta(r)}\}K_1 r+\psi''(r)r\sigma_\eta(r)\\
\nonumber&\leq \{\e^{-2K_2g_\eta(2\ell_0)}+\e^{-2K_2g_\eta(r)}\}K_1 r-(2K_2+2K_1)r\e^{-2K_2g_\eta(r)}\\
\nonumber&\leq -2K_2r\e^{-2K_2g_\eta(r)}\leq -2c_1K_2 r\leq -\frac{2c_1K_2}{1+c_1} \psi(r).
\end{align}
For $r\in[\ell_0,\infty)$, in view of \eqref{k1e}, \eqref{copa} and $\psi'(r)\geq 2c_1$ due to \eqref{cyt}, we have
\begin{align}\label{ctha}\nonumber&-\psi'(r)K_2r+\frac{1}{2}J^\alpha(r\wedge \kappa)\left\{\psi(r-r\wedge\kappa)+ \psi(r+r\wedge\kappa)-2\psi(r)\right\}\\
&\leq -2c_1K_2r\leq -\frac{2c_1K_2}{1+c_1} \psi(r).
\end{align}
Combining \eqref{LRt}, \eqref{cth} and \eqref{ctha}, we conclude that
\begin{align}\label{mcy}\nonumber&\psi'(|x-y|)(K_1|x-y|1_{\{|x-y|\leq \ell_0\}}-K_2|x-y|1_{\{|x-y|> \ell_0\}})+(\scr L_RF_\psi)(x,y)\\
&\leq -\frac{2c_1K_2}{1+c_1}\psi(|x-y|).
\end{align}
Next, by \eqref{PPY01}-\eqref{PPT02}, It\^{o}'s formula and noting that $(\scr L^{\varepsilon}F_{\psi})(x,y)=\pi_R^\varepsilon(|x-y|) (\scr L_R F_\psi)(x,y)$ due to \eqref{sys12}, we have
\begin{align}\label{pzn}
\nonumber\d \psi(|\bar{Z}_t^{i,N}|)&\leq \psi'(|\bar{Z}_t^{i,N}|)\left\<b^{0}(\bar{X}_t^{i})-b^{0}(\bar{X}_t^{i,N}), \frac{\bar{Z}_t^{i,N}}{|\bar{Z}_t^{i,N}|}1_{\{|\bar{Z}_t^{i,N}|\neq 0\}}\right\>\d t\\
&+K_b\psi'(|\bar{Z}_t^{i,N}|)|\bar{Z}_t^{i,N}|\d t+\psi'(|\bar{Z}_t^{i,N}|)\frac{1}{N}\sum_{m=1}^N K_b|\bar{Z}_t^{m,N}|\d t\\
\nonumber&+\psi'(|\bar{Z}_t^{i,N}|)\left|\frac{1}{N}\sum_{m=1}^N b^{(1)}(\bar{X}_t^{i},\bar{X}_t^{m})-\int_{\R^d}b^{(1)}(\bar{X}_t^{i},y)\mu_t(\d y)\right|\d t\\
\nonumber&+\pi_R^\varepsilon(|\bar{Z}_t^{i,N}|) (\scr L_R F_\psi)(\bar{X}_t^{i,N},\bar{X}_t^i)\d t+\d M_t^i
\end{align}
for some martingale $M_t^i$.
Observing that $\psi$ is increasing, we derive from \eqref{mcy} that
\begin{align*}&\psi'(|x-y|)(K_1|x-y|1_{\{|x-y|\leq \ell_0\}}-K_2|x-y|1_{\{|x-y|> \ell_0\}})+\pi_R^\varepsilon(|x-y|)(\scr L_RF_\psi)(x,y)\\
&\leq -\frac{2c_1K_2}{1+c_1}\psi(|x-y|)+\frac{2c_1K_2}{1+c_1}\psi(\varepsilon)+K_1\psi'(0)(\varepsilon\wedge \ell_0).
\end{align*}
This together with \eqref{pzn}, \eqref{pdic}, \eqref{copa} and $\psi'(r)\leq \psi'(0)=1+c_1$ gives
\begin{align*}
\d \sum_{i=1}^N\psi(|\bar{Z}_t^{i,N}|)&\leq -\lambda\sum_{i=1}^N\psi(|\bar{Z}_t^{i,N}|)\d t+\sum_{i=1}^N\d M_t^i+N\frac{2c_1K_2}{1+c_1}\psi(\varepsilon)+NK_1\psi'(0)(\varepsilon\wedge \ell_0)\\
&+\sum_{i=1}^N(1+c_1)\left|\frac{1}{N}\sum_{m=1}^N b^{(1)}(\bar{X}_t^{i},\bar{X}_t^{m})-\int_{\R^d}b^{(1)}(\bar{X}_t^{i},y)\mu_t(\d y)\right|\d t
\end{align*}
for $\lambda=\frac{2c_1K_2}{1+c_1}-2K_b\frac{1+c_1}{2c_1}$. Therefore, it holds
\begin{align}\label{tya}\nonumber\sum_{i=1}^N\E \psi(|\bar{Z}_t^{i,N}|)&\leq \e^{-\lambda t}\sum_{i=1}^N\E \psi(|\bar{Z}_0^{i,N}|)\\
&+\lambda^{-1}N\left\{\frac{2c_1K_2}{1+c_1}\psi(\varepsilon)+K_1\psi'(0)(\varepsilon\wedge \ell_0)\right\}\\
\nonumber&+\int_0^t\e^{-\lambda(t-s)}N(1+c_1)\left|\frac{1}{N}\sum_{m=1}^N b^{(1)}(\bar{X}_s^{i},\bar{X}_s^{m})-\int_{\R^d}b^{(1)}(\bar{X}_s^{i},y)\mu_s(\d y)\right|\d s.
\end{align}
Finally, by the similar argument in the proof of \cite[Proposition 1.5]{CDSX}, we get
\begin{align}\label{Ctq}\sup_{t\geq 0}\E|\bar{X}_t^i|^{1+\delta}<c_0(1+\E|\bar{X}_0^i|^{1+\delta})
\end{align}
for some constant $c_0>0$.
In fact, by Young's inequality
\begin{align}\label{yit}
a^{\frac{\delta}{1+\delta}}c^{\frac{1}{1+\delta}}\leq \frac{\delta}{1+\delta}a+\frac{1}{1+\delta}c,\ \ a,c\geq 0,
\end{align}
we derive
\begin{align}\label{yiq}
(1+|\bar{X}_t^i|^2)^{\frac{\delta}{2}}\E|\bar{X}_t^i|\leq \frac{\delta}{1+\delta} (1+|\bar{X}_t^i|^2)^{\frac{1+\delta}{2}}+\frac{1}{1+\delta}\E(1+|\bar{X}_t^i|^2)^{\frac{1+\delta}{2}}.
\end{align}
Then applying It\^{o}'s formula, {\bf (B2)}, \cite[(A.2)]{CDSX}, \eqref{yiq} and  $K_b<\frac{K_2}{2}$ due to $K_b<\frac{2c_1^2K_2}{(1+c_1)^2}$ with  $c_1\in(0,1]$,
we can find constants $\tilde{c}_0, \tilde{c}_1>0$ such that
\begin{align*}
&\d (1+|\bar{X}_t^i|^2)^{\frac{1+\delta}{2}}\\
&\leq (1+\delta)(1+|\bar{X}_t^i|^2)^{\frac{\delta-1}{2}}\<b^{0}(\bar{X}_t^i)-b^{(0)}(0), \bar{X}_t^i\>\d t+(1+\delta)(1+|\bar{X}_t^i|^2)^{\frac{\delta-1}{2}}\<b^{(0)}(0), \bar{X}_t^i\>\d t\\
&+(1+\delta)(1+|\bar{X}_t^i|^2)^{\frac{\delta}{2}}\left|\int_{\R^d}b^{(1)}(\bar{X}_t^i,y)\mu_t(\d y)\right|\d t-(-\Delta)^{\frac{\alpha}{2}}(1+|\cdot|^2)^{\frac{1+\delta}{2}}(\bar{X}_t^i)\d t+\d M_t\\
&\leq (1+\delta)(K_1+K_2)\ell_0^{1+\delta}\d t-(1+\delta)K_2(1+|\bar{X}_t^i|^2)^{\frac{\delta+1}{2}}\d t+(1+\delta)K_2\d t\\
&+(1+\delta)|b^{(0)}(0)|(1+|\bar{X}_t^i|^2)^{\frac{\delta}{2}}\d t+(1+\delta)(1+|\bar{X}_t^i|^2)^{\frac{\delta}{2}}|b^{(1)}(0,0)|\d t\\
&+(1+\delta)K_b(1+|\bar{X}_t^i|^2)^{\frac{1+\delta}{2}}\d t +(1+\delta)K_b(1+|\bar{X}_t^i|^2)^{\frac{\delta}{2}}\E|\bar{X}_t^i|\d t\\
&+\tilde{c}_0\d t+\tilde{c}_0(1+|\bar{X}_t^i|^2)^{\frac{\delta}{2}}\d t+\d M_t\\
&\leq \{(1+\delta)(K_1+K_2)\ell_0^{1+\delta}+(1+\delta)K_2+\tilde{c}_0\}\d t\\
&+\{(1+\delta)(|b^{(0)}(0)|+|b^{(1)}(0,0)|)+\tilde{c}_0\}(1+|\bar{X}_t^i|^2)^{\frac{\delta}{2}}\d t\\
&-(1+\delta)\left(K_2-2K_b\right)(1+|\bar{X}_t^i|^2)^{\frac{1+\delta}{2}}\d t\\
&+K_b\E(1+|\bar{X}_t^i|^2)^{\frac{1+\delta}{2}}\d t-K_b(1+|\bar{X}_t^i|^2)^{\frac{1+\delta}{2}}\d t+\d M_t\\
&\leq \tilde{c}_1\d t-\frac{(1+\delta)\left(K_2-2K_b\right)}{2}(1+|\bar{X}_t^i|^2)^{\frac{1+\delta}{2}}\d t\\
&+K_b\E(1+|\bar{X}_t^i|^2)^{\frac{1+\delta}{2}}\d t-K_b(1+|\bar{X}_t^i|^2)^{\frac{1+\delta}{2}}\d t+\d M_t
\end{align*}
for some martingale $M_t$, where in the last step, we again use \eqref{yit} to derive
\begin{align*}&\{(1+\delta)(|b^{(0)}(0)|+|b^{(1)}(0,0)|)+\tilde{c}_0\}(1+|\bar{X}_t^i|^2)^{\frac{\delta}{2}}\\
&= \left(\{(1+\delta)(|b^{(0)}(0)|+|b^{(1)}(0,0)|)+\tilde{c}_0\}^{1+\delta}\left(\frac{(K_2-2K_b)(1+\delta)^2}{2\delta} \right)^{-\delta}\right)^{\frac{1}{1+\delta}}\\
&\qquad\quad\times\left(\frac{(K_2-2K_b)(1+\delta)^2}{2\delta} (1+|\bar{X}_t^i|^2)^{\frac{1+\delta}{2}}\right)^{\frac{\delta}{1+\delta}}\\
&\leq \frac{1}{1+\delta} \left(\{(1+\delta)(|b^{(0)}(0)|+|b^{(1)}(0,0)|)+\tilde{c}_0\}^{1+\delta}\left(\frac{(K_2-2K_b)(1+\delta)^2}{2\delta} \right)^{-\delta}\right)\\
&+\frac{(K_2-2K_b)(1+\delta)}{2} (1+|\bar{X}_t^i|^2)^{\frac{1+\delta}{2}}.
\end{align*}
Therefore, \eqref{Ctq} holds.
Letting $\varepsilon\to 0$ in \eqref{tya} and using \eqref{copa}, \eqref{Ctq} and Lemma \ref{CTY} below, we arrive at
\begin{align*}
\widetilde{\W}_1((P_t^{[N],N})^\ast\mu^N_0,(P_t^\ast\mu_0)^{\otimes N})
&\leq  c\e^{-\lambda t}\sum_{i=1}^N\E|\bar{Z}_0^{i,N}|+cN\{\mu_0(|\cdot|^{1+\delta})\}^{\frac{1}{1+\delta}}N^{-\frac{\delta}{1+\delta}}.
\end{align*}
Therefore, the proof is completed by taking infimum with respect to $(\bar{X}_0^{i,N},\bar{X}_0^i)_{1\leq i\leq N}$ satisfying $\L_{(\bar{X}_0^{i,N})_{1\leq i\leq N}}=\mu_0^N, \L_{(\bar{X}_0^{i})_{1\leq i\leq N}}=\mu_0^{\otimes N}$.

\end{proof}
\begin{appendix}
\section{Appendix }
In this section, we give some auxiliary lemmas used in the proof of the main results. The first lemma involves in the quantitative convergence rate of the law of large number in $L^1(\P)$ for i.i.d. real-valued random variables which may have infinite second moment. Note that Lemma \ref{TMY} below for $\varepsilon=1$ is known and is easy to be verified.
\begin{lem} \label{TMY} Assume that $(\xi_i)_{i\geq 1}$ are i.i.d. real-valued random variables with $\E|\xi_1|^{1+\varepsilon}<\infty$ for some $\varepsilon\in(0,1)$. Then there exists a constant $C>0$ only depending on $\varepsilon$ such that
\begin{align*}
\E\left|\frac{1}{N}\sum_{i=1}^N\xi_i-\E\xi_1\right|\leq C\{\E|\xi_1|^{1+\varepsilon}\}^{\frac{1}{1+\varepsilon}}N^{-\frac{\varepsilon}{1+\varepsilon}}.
\end{align*}
\end{lem}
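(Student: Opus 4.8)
The plan is to reduce the statement to the von Bahr--Esseen moment inequality for sums of independent centred random variables. Write $p:=1+\varepsilon\in(1,2)$ and $Y_i:=\xi_i-\E\xi_1$; the $Y_i$ are then i.i.d., mean zero, and by Jensen's inequality satisfy $\E|Y_1|^p\leq 2^{p-1}\big(\E|\xi_1|^p+|\E\xi_1|^p\big)\leq 2^p\,\E|\xi_1|^p$. The key ingredient I would invoke is the classical fact that for independent mean-zero random variables and $1\leq p\leq 2$,
$$\E\Big|\sum_{i=1}^N Y_i\Big|^p\;\leq\;2\sum_{i=1}^N\E|Y_i|^p\;=\;2N\,\E|Y_1|^p.$$

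Granting this, the lemma follows in one line: by Jensen applied to the concave function $x\mapsto x^{1/p}$ on $[0,\infty)$,
$$\E\Big|\frac1N\sum_{i=1}^N\xi_i-\E\xi_1\Big|\;\leq\;\Big(\E\Big|\frac1N\sum_{i=1}^N Y_i\Big|^p\Big)^{1/p}\;\leq\;\Big(\frac{2N\,\E|Y_1|^p}{N^p}\Big)^{1/p}\;=\;\big(2\,\E|Y_1|^p\big)^{1/p}\,N^{-(p-1)/p},$$
and since $(p-1)/p=\varepsilon/(1+\varepsilon)$ and $\E|Y_1|^p\leq 2^p\E|\xi_1|^p$, the right-hand side is at most $C\{\E|\xi_1|^{1+\varepsilon}\}^{1/(1+\varepsilon)}N^{-\varepsilon/(1+\varepsilon)}$ with $C:=2^{1+1/p}\leq 4$ depending only on $\varepsilon$. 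The only bookkeeping point is to note that $C$ is independent of the law of $\xi_1$ and of $N$, which is transparent from the computation.

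If a self-contained argument is preferred over citing von Bahr--Esseen, I would instead run a direct truncation at a level $\lambda>0$: split $Y_i=Y_i1_{\{|Y_i|\leq\lambda\}}+Y_i1_{\{|Y_i|>\lambda\}}$, estimate the centred bounded part in $L^2(\P)$ using $\E[Y_i^2 1_{\{|Y_i|\leq\lambda\}}]\leq\lambda^{2-p}\E|Y_i|^p$, estimate the tail part in $L^1(\P)$ using $\E[|Y_i|1_{\{|Y_i|>\lambda\}}]\leq\lambda^{1-p}\E|Y_i|^p$, sum over $i$, divide by $N$, and optimise the cut-off $\lambda$ (the best scaling is $\lambda$ of order $(\E|Y_1|^p)^{1/p}N^{1/p}$), which reproduces the rate $N^{-\varepsilon/(1+\varepsilon)}$. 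Either way there is no genuine obstacle; the content is exactly a Marcinkiewicz--Zygmund-type estimate packaged so that the constant depends only on $\varepsilon$, and the only mild point requiring care in the second route is choosing the truncation level so that both contributions are of the same order in $N$.
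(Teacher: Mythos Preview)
Your argument is correct. The paper does not invoke von Bahr--Esseen; instead it carries out precisely the truncation route you sketch as a fallback, applied to the original variables rather than the centred ones: it sets $\xi_i^{(a)}=(\xi_i\wedge a)\vee(-a)$, splits into three terms via the triangle inequality, controls the truncated empirical mean against its expectation in $L^2$ using $\E|\xi_1^{(a)}|^2\leq 2a^{1-\varepsilon}\E|\xi_1|^{1+\varepsilon}$, bounds the recentring error $\E|\xi_1^{(a)}-\xi_1|$ by H\"older--Markov as $\E|\xi_1|^{1+\varepsilon}a^{-\varepsilon}$, and then optimises over $a$ to recover the $N^{-\varepsilon/(1+\varepsilon)}$ rate. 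Your primary route via von Bahr--Esseen is shorter and gives an explicit constant $2^{1+1/p}$, at the cost of citing that inequality as a black box; the paper's version is fully self-contained and makes the mechanism (second-moment control after truncation plus tail control) visible, which is arguably more instructive but longer. Your alternative truncation of the centred $Y_i$ is a cosmetic variant of the paper's argument---truncating before or after centring changes only the bookkeeping of the recentring term, not the analysis.
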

\begin{proof} For any $a>0$, define $\xi_i^{(a)}=(\xi_i\wedge a)\vee(-a), i\geq 1$. Since $(\xi_i)_{i\geq 1}$ are i.i.d., we conclude that
\begin{align*}
&\E\left|\frac{1}{N}\sum_{i=1}^N\xi_i-\E\xi_1\right|\\
&\leq \E\left|\frac{1}{N}\sum_{i=1}^N\xi_i-\frac{1}{N}\sum_{i=1}^N\xi_i^{(a)}\right| +\E\left|\frac{1}{N}\sum_{i=1}^N\xi_i^{(a)}-\E\xi_1^{(a)}\right|+\left|\E\xi_1^{(a)}-\E\xi_1\right|\\
&\leq \E\left|\frac{1}{N}\sum_{i=1}^N\xi_i^{(a)}-\E\xi_1^{(a)}\right|+2\E|\xi_1^{(a)}-\xi_1|=:I_1+I_2.
\end{align*}
It follows from Markov's inequality that
$$\E|\xi_i^{(a)}|^2=\E(|\xi_i|^21_{|\xi_i|<a})+a^2\P(|\xi_i|\geq a)\leq a^{1-\varepsilon}\E|\xi_i|^{1+\varepsilon}+a^{1-\varepsilon}\E|\xi_i|^{1+\varepsilon} =2a^{1-\varepsilon}\E|\xi_i|^{1+\varepsilon}, \ \ i\geq1. $$
Since $\E|\xi_1^{(a)}|^2<\infty$, we have
\begin{align*}
I_1=\E\left|\frac{1}{N}\sum_{i=1}^N\xi_i^{(a)}-\E\xi_1^{(a)}\right|
&\leq \left(\E\left|\frac{1}{N}\sum_{i=1}^N\xi_i^{(a)}-\E\xi_1^{(a)}\right|^2\right)^{\frac{1}{2}}\\
&=N^{-\frac{1}{2}}\sqrt{var(\xi_1^{(a)})}
\leq \sqrt{\E|\xi_1^{(a)}|^2}N^{-\frac{1}{2}}\leq \sqrt{2a^{1-\varepsilon}\E|\xi_1|^{1+\varepsilon}}N^{-\frac{1}{2}}.
\end{align*}
Next, we deal with $I_2$. By the definition of $\xi_1^{(a)}$, H\"{o}lder's inequality and Markov's inequality, we derive
\begin{align*}
\E|\xi_1^{(a)}-\xi_1|\leq \E[|\xi_1|1_{|\xi_1|\geq a}]&\leq [\E|\xi_1|^{1+\varepsilon}]^{\frac{1}{1+\varepsilon}}\P(|\xi_1|>a)^{\frac{\varepsilon}{1+\varepsilon}}\leq \E|\xi_1|^{1+\varepsilon}a^{-\varepsilon}.
\end{align*}
Therefore, we have
\begin{align}\label{mys}
\E\left|\frac{1}{N}\sum_{i=1}^N\xi_i-\E\xi_1\right|
&\leq\inf_{a> 0}\left\{ \sqrt{2a^{1-\varepsilon}\E|\xi_1|^{1+\varepsilon}}N^{-\frac{1}{2}}+ 2\E|\xi_1|^{1+\varepsilon}a^{-\varepsilon}\right\}.
\end{align}
Define the function $G:(0,\infty)\to\R$ as $$G(a)=\sqrt{2a^{1-\varepsilon}\E|\xi_1|^{1+\varepsilon}}N^{-\frac{1}{2}}+ 2\E|\xi_1|^{1+\varepsilon}a^{-\varepsilon},\ \ a>0.$$
It is easy to see that
$$G'(a)=\sqrt{2\E|\xi_1|^{1+\varepsilon}}N^{-\frac{1}{2}}\frac{1-\varepsilon}{2}a^{\frac{1-\varepsilon}{2}-1} -\varepsilon2\E|\xi_1|^{1+\varepsilon}a^{-\varepsilon-1}=0$$
implies
$$a=\left(\frac{2\sqrt{2}\varepsilon} {1-\varepsilon}\right)^{\frac{2}{1+\varepsilon}}\{\E|\xi_1|^{1+\varepsilon}\}^{\frac{1}{1+\varepsilon}} N^{\frac{1}{1+\varepsilon}}.$$
Substituting this into \eqref{mys}, there exits a constant $C$ only depending on $\varepsilon$ such that
\begin{align*}
\E\left|\frac{1}{N}\sum_{i=1}^N\xi_i-\E\xi_1\right|
&\leq C\{\E|\xi_1|^{1+\varepsilon}\}^{\frac{1}{1+\varepsilon}}N^{-\frac{\varepsilon}{1+\varepsilon}}.
\end{align*}
Therefore, the proof is completed.
\end{proof}
Basing on Lemma \ref{TMY}, we now show a crucial lemma to the proof of propagation of chaos.
\begin{lem}\label{CTY} Let $(V,\|\cdot\|_V)$ be a Banach space. $(Z_i)_{i\geq 1}$ are i.i.d. $V$-valued random variables with $\E\|Z_1\|_V^{1+\varepsilon}<\infty$ for some $\varepsilon\in(0,1)$ and $h:V\times V\to \R$ is  measurable and of at most linear growth, i.e. there exists a constant $K_h>0$ such that
\begin{align}\label{lig}|h(v,\tilde{v})|\leq K_h(1+\|v\|_V+\|\tilde{v}\|_V),\ \ v,\tilde{v}\in V.
\end{align}
Then there exists a constant $\tilde{c}>0$ only depending on $\varepsilon$ and $K_h$ such that
\begin{align*}\E\left|\frac{1}{N}\sum_{m=1}^N h(Z_1,Z_m)-\int_{V} h(Z_1,y)\L_{Z_1}(\d y)\right|\leq \tilde{c}\{1+\{\E\|Z_1\|_{V}^{1+\varepsilon}\}^{\frac{1}{1+\varepsilon}}\}N^{-\frac{\varepsilon}{1+\varepsilon}}.
\end{align*}
\end{lem}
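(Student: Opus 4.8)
The plan is to condition on $Z_1$ and reduce the statement to Lemma \ref{TMY} applied to the i.i.d. family $\big(h(Z_1,Z_m)\big)_{m\ge 2}$, handling the diagonal term $m=1$ separately as a lower-order boundary contribution. First I would write, with $\mu:=\L_{Z_1}$,
$$\frac{1}{N}\sum_{m=1}^N h(Z_1,Z_m)-\int_V h(Z_1,y)\mu(\d y)=\frac{1}{N}h(Z_1,Z_1)-\frac{1}{N}\int_V h(Z_1,y)\mu(\d y)+\frac{N-1}{N}\bigg(\frac{1}{N-1}\sum_{m=2}^N h(Z_1,Z_m)-\int_V h(Z_1,y)\mu(\d y)\bigg).$$
By \eqref{lig} and Jensen's inequality, the first two terms have $L^1(\P)$-norm at most $cK_h\big(1+\{\E\|Z_1\|_V^{1+\varepsilon}\}^{1/(1+\varepsilon)}\big)/N$, which is of the desired form since $\varepsilon/(1+\varepsilon)<1$ forces $1/N\le N^{-\varepsilon/(1+\varepsilon)}$.

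For the main term, note that for $\mu$-a.e.\ $v\in V$ the random variables $\big(h(v,Z_m)\big)_{m\ge 2}$ are i.i.d.\ and, by \eqref{lig} together with $\E\|Z_2\|_V^{1+\varepsilon}<\infty$, satisfy $\E|h(v,Z_2)|^{1+\varepsilon}<\infty$. Since $Z_1$ is independent of $(Z_m)_{m\ge 2}$, freezing $v=Z_1$ and applying Lemma \ref{TMY} to $\xi_m:=h(v,Z_{m+1})$ gives
$$\E\bigg[\bigg|\frac{1}{N-1}\sum_{m=2}^N h(v,Z_m)-\int_V h(v,y)\mu(\d y)\bigg|\bigg]\le C\{\E|h(v,Z_2)|^{1+\varepsilon}\}^{\frac{1}{1+\varepsilon}}(N-1)^{-\frac{\varepsilon}{1+\varepsilon}},\qquad N\ge 2.$$
By \eqref{lig} and Minkowski's inequality in $L^{1+\varepsilon}(\P)$ one has $\{\E|h(v,Z_2)|^{1+\varepsilon}\}^{1/(1+\varepsilon)}\le K_h\big(1+\|v\|_V+\{\E\|Z_1\|_V^{1+\varepsilon}\}^{1/(1+\varepsilon)}\big)$. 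Integrating the displayed inequality against $\mu(\d v)=\L_{Z_1}(\d v)$, using $\int_V\|v\|_V\,\mu(\d v)\le\{\E\|Z_1\|_V^{1+\varepsilon}\}^{1/(1+\varepsilon)}$ and $(N-1)^{-\varepsilon/(1+\varepsilon)}\le 2^{\varepsilon/(1+\varepsilon)}N^{-\varepsilon/(1+\varepsilon)}$ for $N\ge 2$, yields a bound of the claimed form for the main term; the case $N=1$ follows directly from \eqref{lig}. Combining with the estimate for the two boundary terms completes the proof, with $\tilde c$ depending only on $\varepsilon$ and $K_h$.

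The only genuinely delicate point is the conditional invocation of Lemma \ref{TMY}: one must justify, via the independence of $Z_1$ and $(Z_m)_{m\ge 2}$ and a disintegration of $\L_{Z_1}$, that $\E[\,\cdot\mid Z_1]$ is obtained by substituting $v=Z_1$ into the i.i.d.\ estimate, and that $v\mapsto\E|h(v,Z_2)|^{1+\varepsilon}$ is measurable and $\mu$-integrable (which follows from \eqref{lig} and Fubini). Everything else is bookkeeping: repeated use of \eqref{lig}, Jensen's and Minkowski's inequalities, and the elementary observation that boundary terms of order $1/N$ are absorbed into the rate $N^{-\varepsilon/(1+\varepsilon)}$.
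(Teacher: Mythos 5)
Your proof is correct and follows essentially the same strategy as the paper's: peel off the diagonal $m=1$ term, condition on $Z_1$ and apply Lemma \ref{TMY} to the remaining i.i.d.\ family $(h(v,Z_m))_{m\geq 2}$, then integrate out $v\sim\L_{Z_1}$ and absorb the $O(1/N)$ boundary contributions. The only differences are cosmetic bookkeeping: you normalize the independent sum by $N-1$ (requiring the $N=1$ case to be treated separately), whereas the paper adds back a fresh $m=1$ term to keep the $1/N$ normalization uniform in $N$, and you apply Minkowski to $\{\E|h(v,Z_2)|^{1+\varepsilon}\}^{1/(1+\varepsilon)}$ before integrating while the paper applies Jensen to pull the $1/(1+\varepsilon)$-power out after integrating.
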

\begin{proof} Applying Lemma \ref{TMY} and Jensen's inequality, we can find a constant $c_1>0$ only depending on $\varepsilon$ such that
$$\E\left\{\left\{\E\left|\frac{1}{N}\sum_{m=1}^N h(x,Z_m)-\int_{V} h(x,y)\L_{Z_1}(\d y)\right|\right\}\Bigg|_{x=Z_1}\right\}\leq c_1\{\E|h(Z_1,Z_2)|^{1+\varepsilon}\}^{\frac{1}{1+\varepsilon}}N^{-\frac{\varepsilon}{1+\varepsilon}}.$$
This together with \eqref{lig} and the fact that $(Z_i)_{i\geq 1}$ are i.i.d. yields that
\begin{align*}&\E\left|\frac{1}{N}\sum_{m=1}^N h(Z_1,Z_m)-\int_{V}  h(Z_1,y)\L_{Z_1}(\d y)\right|\\
&\leq \E\left|\frac{1}{N} h(Z_1,Z_1)\right|+\E\left|\frac{1}{N}\sum_{m=2}^N h(Z_1,Z_m)-\int_{V} h(Z_1,y)\L_{Z_1}(\d y)\right|\\
&= \E\left|\frac{1}{N} h(Z_1,Z_1) \right|\\
&\qquad\quad+\E\left\{\left\{\E\left|\frac{1}{N}\sum_{m=2}^N h(x,Z_m)-\int_{V} h(x,y)\L_{Z_1}(\d y)\right|\right\}\Bigg|_{x=Z_1}\right\}\\
&\leq  \E\left|\frac{1}{N} h(Z_1,Z_1) \right|\\
&\qquad\quad+\E\left\{\left\{\E\left|\frac{1}{N}\sum_{m=1}^N h(x,Z_m)-\int_{V} h(x,y)\L_{Z_1}(\d y)\right|+\E\left|\frac{1}{N}h(x,Z_1)\right|\right\}\Bigg|_{x=Z_1}\right\}\\
&\leq  \frac{1}{N}\E\left| h(Z_1,Z_1) \right|+\frac{1}{N}\E|h(Z_2,Z_1)|\\
&\qquad\quad+\E\left\{\left\{\E\left|\frac{1}{N}\sum_{m=1}^N h(x,Z_m)-\int_{V} h(x,y)\L_{Z_1}(\d y)\right|\right\}\Bigg|_{x=Z_1}\right\}\\
&\leq K_h\frac{1}{N}(2+4\E\|Z_1\|_V)+ c_1\{\E|h(Z_2,Z_1)|^{1+\varepsilon}\}^{\frac{1}{1+\varepsilon}}N^{-\frac{\varepsilon}{1+\varepsilon}}\\
&\leq \tilde{c}\{1+\{\E\|Z_1\|_{V}^{1+\varepsilon}\}^{\frac{1}{1+\varepsilon}}\}N^{-\frac{\varepsilon}{1+\varepsilon}}
\end{align*}
for some constant $\tilde{c}>0$ only depending on $\varepsilon$ and $K_h$. So, we complete the proof.
\end{proof}
By modifying \cite[part (c) of proof of Theorem 2.1]{WW} for the additive noise case, we provide a lemma for the Yosida approximation in the multiplicative Brownian motion noise below. Since we have not found any references on it, we give the proof for completeness.

Let $W_t$ be an $n$-dimensional Brownian motion on some complete filtration probability space $(\Omega, \scr F, (\scr F_t)_{t\geq 0},\P)$. $b:\R^d\to\R^d$ and $\sigma:\R^d\to\R^d\otimes\R^n$ are measurable.
Consider
 \begin{align}\label{ghw}\d X_t=b(X_t)\d t+\sigma(X_t)\d W_t.
\end{align}
We make the following assumptions on $b$ and $\sigma$.
\begin{enumerate}
\item[\bf{(H)}] There exists a constant $C_\sigma>0$ such that
\begin{align}\label{bslipsg1}
\|\sigma(x_1)-\sigma(x_2)\|^2_{HS}\leq C_\sigma|x_1-x_2|^2, \ \ x_1,x_2\in\R^d.
\end{align}
$b$ is continuous and there exists $C_b\in\R$ such that
\begin{align*}
&2\langle x_1-x_2, b(x_1)-b(x_2)\rangle\leq C_b|x_1-x_2|^2,\ \ x_1,x_2\in\R^d.
\end{align*}
\end{enumerate}
Let
    $$
        \tilde{b}(x):=b(x)
        -\frac12 C_bx,\ \ x\in\R^{d}.
    $$
    Then $\tilde{b}$ is also continuous.
    For any $n\geq 1$, let
    $$
        \tilde{b}^{(n)}(x)
        :=n\left[
        \left(\operatorname{id}
        -\frac{1}{n}\tilde{b}\right)^{-1}(x)-x
        \right],\quad x\in\R^d,
    $$
    where $\mathrm{id}$ is the identity map on $\R^d$.
    Then for any $n\geq 1$, $\tilde{b}^{(n)}$ is globally Lipschitz continous,
    $|\tilde{b}^{(n)}|\leq
    |\tilde{b}|$ and
    $\lim_{n\to\infty}
    \tilde{b}^{(n)}=
    \tilde{b}$. Let
    $b^{(n)}(x):=
    \tilde{b}^{(n)}(x)
    +\frac12 C_bx, x\in\R^d$. Then $b^{(n)}$ satisfies
    \begin{equation}\label{jg4dv7}
        2\<b^{(n)}(x)-
        b^{(n)}(y),
        x-y\>\leq C_b|x-y|^2,\ \ n\geq 1, x,y\in\R^d.
    \end{equation}
     Let $X_t^{(n)}$ solve \eqref{ghw} with $b^{(n)}$ replacing $b$ and $X_0^{(n)}=X_0$.
\begin{lem}\label{yap} Assume {\bf(H)}. Then for any $t\geq 0$, there exists a subsequence $\{n_k\}_{k\geq 1}$ such that $\P$-a.s. $X_t^{(n_k)}$ converges to $X_t$ as $k\to\infty$.
\end{lem}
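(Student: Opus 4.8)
The plan is to prove that $X^{(n)}_t\to X_t$ in probability as $n\to\infty$ for each fixed $t$; since every sequence converging in probability has an a.s.\ convergent subsequence, this yields the claimed $\{n_k\}$. The engine is a localized $L^2$ estimate on $|X^{(n)}_t-X_t|^2$ resting on three facts: the monotonicity bound \eqref{jg4dv7}, which $b^{(n)}$ satisfies \emph{with the same constant} $C_b$ for every $n$; the Lipschitz bound \eqref{bslipsg1} on $\sigma$; and the fact that $b^{(n)}\to b$ \emph{uniformly on compact sets}, not merely pointwise.

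First I would record the properties of $b^{(n)}$. Writing $J_n:=(\operatorname{id}-\tfrac1n\tilde b)^{-1}$ (the resolvent, which is well defined and $1$-Lipschitz since $\operatorname{id}-\tfrac1n\tilde b$ is strongly monotone and continuous), the definition of $\tilde b^{(n)}$ gives $\tilde b^{(n)}=\tilde b\circ J_n$ and $|J_nx-x|=\tfrac1n|\tilde b^{(n)}(x)|\le\tfrac1n|\tilde b(x)|$. Hence $\sup_{|x|\le R}|J_nx-x|\to0$ for every $R$, and the uniform continuity of $\tilde b$ on $\{|x|\le R+1\}$ yields that
\[
\delta_n(R):=\sup_{|x|\le R}\bigl|b^{(n)}(x)-b(x)\bigr|=\sup_{|x|\le R}\bigl|\tilde b^{(n)}(x)-\tilde b(x)\bigr|
\]
tends to $0$ as $n\to\infty$, for every $R>0$. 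I would also note that, since $|b^{(n)}(0)|=|\tilde b^{(n)}(0)|\le|\tilde b(0)|=|b(0)|$, the bound \eqref{jg4dv7} forces a one-sided linear growth condition on $b^{(n)}$ with constants independent of $n$; together with the linear growth of $\sigma$ coming from \eqref{bslipsg1}, this gives by It\^o's formula and Gronwall's inequality that $\sup_{n\ge1}\E\bigl[\sup_{s\le t}|X^{(n)}_s|^2\bigr]<\infty$ for every $t$ (assuming, as in all our applications, that $X_0$ has a finite second moment; the general case reduces to this one by replacing $X_0$ with $X_0 1_{\{|X_0|\le M\}}$, using pathwise uniqueness, and letting $M\to\infty$). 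In particular, with $\tau_R:=\inf\{s:|X_s|\ge R\}$ and $\tau^{(n)}_R:=\inf\{s:|X^{(n)}_s|\ge R\}$, one has $\sup_n\mathbb P(\tau^{(n)}_R<t)+\mathbb P(\tau_R<t)\to0$ as $R\to\infty$.

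The core step is the estimate. Fix $t>0$, $R>0$ and put $\tau:=\tau_R\wedge\tau^{(n)}_R$. On $[0,t\wedge\tau]$ both $|X^{(n)}_s|\le R$ and $|X_s|\le R$, so the stochastic integral in It\^o's formula for $|X^{(n)}_{s\wedge\tau}-X_{s\wedge\tau}|^2$ has a bounded integrand and is a genuine martingale. Using $X^{(n)}_0=X_0$, the splitting
\begin{align*}
&2\langle X^{(n)}_s-X_s,\,b^{(n)}(X^{(n)}_s)-b(X_s)\rangle\\
&\quad=2\langle X^{(n)}_s-X_s,\,b^{(n)}(X^{(n)}_s)-b^{(n)}(X_s)\rangle+2\langle X^{(n)}_s-X_s,\,b^{(n)}(X_s)-b(X_s)\rangle,
\end{align*}
the bound \eqref{jg4dv7} for the first term, Cauchy--Schwarz for the second, \eqref{bslipsg1} for the It\^o correction $\|\sigma(X^{(n)}_s)-\sigma(X_s)\|_{HS}^2$, and $|b^{(n)}(X_s)-b(X_s)|\le\delta_n(R)$ on $[0,\tau]$, I obtain for $u_n(t):=\E|X^{(n)}_{t\wedge\tau}-X_{t\wedge\tau}|^2$ that $u_n(t)\le(C_b+1+C_\sigma)\int_0^t u_n(s)\,\d s+t\,\delta_n(R)^2$, whence $u_n(t)\le t\,\delta_n(R)^2\,\e^{(C_b+1+C_\sigma)t}$ by Gronwall.

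Finally, for $\varepsilon>0$, splitting over $\{\tau\ge t\}$ (on which $X^{(n)}_t=X^{(n)}_{t\wedge\tau}$ and $X_t=X_{t\wedge\tau}$) and its complement, and applying Chebyshev's inequality, I get
\[
\mathbb P\bigl(|X^{(n)}_t-X_t|>\varepsilon\bigr)\le\mathbb P(\tau_R<t)+\mathbb P(\tau^{(n)}_R<t)+\varepsilon^{-2}t\,\delta_n(R)^2\,\e^{(C_b+1+C_\sigma)t}.
\]
Given $\eta>0$, choosing $R$ so large that $\sup_n[\mathbb P(\tau_R<t)+\mathbb P(\tau^{(n)}_R<t)]<\eta/2$ and then $n$ large enough that the last term is $<\eta/2$ shows that $X^{(n)}_t\to X_t$ in probability, and passing to a subsequence gives a.s.\ convergence. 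The main obstacle is precisely what dictates this structure: because $b$ is only assumed continuous (beyond the monotonicity), the difference $b^{(n)}(X^{(n)}_s)-b(X_s)$ cannot be controlled without first confining the paths to a bounded set, so the exit-time localization and the locally uniform convergence $\delta_n(R)\to0$ are both essential; the uniform-in-$n$ moment bound, which makes the localization error vanish uniformly, is the secondary point requiring care.
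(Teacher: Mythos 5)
Your proof is correct, and while it rests on the same core engine as the paper's---the localized $L^2$ Gronwall estimate for $|X^{(n)}_t-X_t|^2$ exploiting the uniform one-sided Lipschitz bound \eqref{jg4dv7} and the Lipschitz bound \eqref{bslipsg1} on $\sigma$---it departs from it in how the localization and the passage to the limit are organized. The paper stops only the \emph{limit} process $X$ at $\tau_m=\inf\{s:|X_s|\geq m\}$, obtains
$\E|X^{(n)}_{t\wedge\tau_m}-X_{t\wedge\tau_m}|^2\leq \e^{(C_b+1+C_\sigma)t}\,\E\int_0^{t\wedge\tau_m}|\tilde b^{(n)}(X_s)-\tilde b(X_s)|^2\,\d s$,
and drives the right-hand side to zero by dominated convergence: the domination $|\tilde b^{(n)}|\leq|\tilde b|$ plus pointwise convergence $\tilde b^{(n)}\to\tilde b$ suffice, since on $[0,\tau_m]$ the argument $X_s$ stays in a compact set. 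A diagonal extraction over $m$ then yields the a.s.\ convergent subsequence. You instead stop \emph{both} processes at $\tau_R\wedge\tau^{(n)}_R$, which forces you to control $\P(\tau^{(n)}_R<t)$ uniformly in $n$ and hence to establish the uniform second moment bound $\sup_n\E[\sup_{s\leq t}|X^{(n)}_s|^2]<\infty$; in exchange, you can use the (slightly stronger, but cleanly derived) locally uniform convergence $\delta_n(R)\to0$ and obtain convergence \emph{in probability} directly, with no diagonal argument. Each route trades a different auxiliary lemma: the paper uses the domination $|\tilde b^{(n)}|\leq|\tilde b|$ to sidestep both the uniform moment bound and the locally uniform convergence; yours makes the rate transparent via $\delta_n(R)$ and produces a stronger conclusion (convergence in probability). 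Your remark that the uniform moment bound requires $\E|X_0|^2<\infty$, with a truncation workaround, is a legitimate caveat; note that the paper's version of the localization avoids this dependence entirely (modulo the stopped local martingale being handled by a further localization and Fatou rather than by the stated moment bound, which by itself would not make it a true martingale).
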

\begin{proof} Fix $t\geq 0$.
It is standard to derive from \eqref{jg4dv7} and \eqref{bslipsg1} that
$$\sup_{n\geq 1}\E|X_t^{(n)}|^2<\infty.$$
    By It\^{o}'s formula, we derive
    \begin{align*}
        &\d|X_t^{(n)}-X_t|^2\\
        &=2\<X_t^{(n)}-X_t,
        b^{(n)}(
        X_t^{(n)})
        -b^{(n)}(X_t)\>\,\d t+2\<X_t^{(n)}-X_t,
        b^{(n)}(
        X_t)
        -b(X_t)\>\,\d t\\
        &\quad+2\<X_t^{(n)}-X_t,
        \{\sigma(
        X_t^{(n)})
        -\sigma(X_t)\}\d W_t\>+\|\sigma(
        X_t^{(n)})
        -\sigma(X_t)\|^2_{HS}\d t\\
        &\leq (C_b+1+C_\sigma)|X_t^{(n)}-X_t|^2\d t
        +\big|b^{(n)}(
        X_t)
        -b(X_t)
        \big|^2\,\d t\\
        &+2\<X_t^{(n)}-X_t,
        \{\sigma(
        X_t^{(n)})
        -\sigma(X_t)\}\d W_t\>.
    \end{align*}
    For any $m\geq 1$, define $\tau_m=\inf\{t\geq 0, |X_t|\geq m\}$. Then Gronwall's inequality implies that
\begin{align*}
       \E |X_{t\wedge\tau_m}^{(n)}-X_{t\wedge\tau_m}|^2
        &\leq
        \e^{(C_b+1+C_\sigma)t}\E\int_0^{t\wedge \tau_m}\big|\tilde{b}^{(n)}(
        X_s)
        -\tilde{b}(X_s)
        \big|^2\,\d s
\end{align*}
    Since $\tilde{b}$ is continuous and $|\tilde{b}^{(n)}|\leq |\tilde{b}|$ and $\lim_{n\to\infty}\tilde{b}^{(n)}=\tilde{b}$, we may use the
    dominated convergence theorem to derive
    that for any $m\geq 1$,
    $$\lim_{n\to\infty}\E |X_{t\wedge\tau_m}^{(n)}-X_{t\wedge\tau_m}|^2=0.$$
    This implies that for any $m\geq 1$, there exists a subsequence $\{n^{(m)}_k\}_{k\geq 1}$ and  $\Omega_m\in\F$ with $\P(\Omega_m)=1$ such that $$\lim_{k\to\infty}
    X_{t\wedge\tau_m}^{(n^{(m)}_k)}(\omega)=X_{t\wedge\tau_m}(\omega),\ \ \omega\in\Omega_m. $$
    By the diagonal argument, we can find a subsequence $\{n_k\}_{k\geq 1}$ and $\bar{\Omega}\in\scr F$ with $\P(\bar{\Omega})=1$ such that for any $m\geq 1$, $$\lim_{k\to\infty}
    X_{t\wedge\tau_m}^{(n_k)}(\omega)=X_{t\wedge\tau_m}(\omega),\ \ \omega\in\bar{\Omega}. $$
    In view of $\P$-a.s. $\tau_m\uparrow\infty$ as $m\uparrow\infty$, we conclude that $\P$-a.s. $\lim_{k\to\infty}
    X_{t}^{(n_k)}=X_{t}$. The proof
    is now finished.
    \end{proof}
\end{appendix}

\end{document}